\newcommand{\Z}{\ensuremath{\mathbb{Z}}}
\newcommand{\Q}{\ensuremath{\mathbb{Q}}}
\newcommand{\CC}{\ensuremath{\mathbb{C}}}
\newcommand{\Tr}{\operatorname{tr}}
\newcommand{\Ind}{\operatorname{Ind}}
\newcommand{\Gal}{\operatorname{Gal}}
\newcommand{\identity}{\ensuremath{\mathrm{id}}}
\newcommand{\End}{\operatorname{End}}
\newcommand{\rightiso}{\ensuremath{\stackrel{\sim}{\rightarrow}}}
\newcommand{\Hm}{\operatorname{H}}	
\newcommand{\cate}[1]{\ensuremath{\mathsf{#1}}}	
\newcommand{\Spec}{\operatorname{Spec}}
\newcommand{\Gm}{\ensuremath{\mathbb{G}_\mathrm{m}}}
\newcommand{\Res}{\operatorname{Res}}
\newcommand{\dtimes}[1]{\ensuremath{\underset{#1}{\times}}}
\newcommand{\GL}{\operatorname{GL}}
\newcommand{\SO}{\operatorname{SO}}
\newcommand{\Or}{\operatorname{O}}
\newcommand{\Sp}{\operatorname{Sp}}
\newcommand{\GSp}{\operatorname{GSp}}
\newcommand{\GSpin}{\operatorname{GSpin}}
\newcommand{\pr}{\ensuremath{\mathbf{p}}}
\newcommand{\bmu}{\ensuremath{\bm\mu}}
\newcommand{\rev}{\ensuremath{\mathbf{p}}} 
\theoremstyle{plain}
\newtheorem{proposition}{Proposition}
\newtheorem{lemma}[proposition]{Lemma}
\newtheorem{theorem}[proposition]{Theorem}
\newtheorem{corollary}[proposition]{Corollary}
\theoremstyle{definition}
\newtheorem{definition}[proposition]{Definition}
\newtheorem{definition-theorem}[proposition]{Definition--Theorem}
\newtheorem{definition-proposition}[proposition]{Definition--Proposition}
\newtheorem{remark}[proposition]{Remark}
\numberwithin{equation}{section}
\numberwithin{proposition}{section}
\title{Decomposed Levi subgroups in BD-covers of classical groups}
\author{Wen-Wei Li}
\date{}
\renewcommand{\l@section}{\@dottedtocline{1}{1.5em}{2.0em}}
\renewcommand{\l@subsection}{\@dottedtocline{2}{4.0em}{3.0em}}
\begin{document}
	
\maketitle

\begin{abstract}
	For finite topological central extensions of $p$-adic classical groups, Heiermann and Wu introduced the notion of decomposed Levi subgroups in their study of intertwining algebras. In this note, we show that for symplectic and special orthogonal groups over local fields, except the split $\SO(4)$, all Levi subgroups are decomposed if the central extension arises from the Brylinski--Deligne construction. Discussions for certain unitary groups, $\SO(4)$ and $\GSpin(2n+1)$ are also given.
\end{abstract}
	
{\scriptsize
	\begin{tabular}{ll}
		\textbf{MSC (2020)} & Primary 22E50; Secondary 11F70 \\
		\textbf{Keywords} & BD-cover, classical groups
	\end{tabular}}
	
\setcounter{tocdepth}{1}
\tableofcontents
	
\section{Introduction}\label{sec:intro}
\subsection{Motivation}
For a classical group $G$ over a non-Archimedean local field $F$, let $[M, \sigma]$ be a Bernstein component of the category $\cate{Rep}_{G(F)}$ of smooth representations over $\CC$ of $G(F)$, the group of $F$-points of $G$. For the corresponding subcategory of $\cate{Rep}_{G(F)}$, Heiermann showed in \cite{Hei11} that the endomorphism algebra of a standard projective generator is the semi-direct product of an affine Hecke algebra of possibly unequal parameters with a finite group algebra. The resulting equivalence of module categories is compatible with parabolic induction, Jacquet modules, and so on. Type theory is not required in this approach.

In \cite{HW25}, Heiermann and Wu extend this important result to $m$-fold covers $\tilde{G}$ of $G(F)$ by assuming that the preimage $\tilde{M} \subset \tilde{G}$ is decomposed, where $m \in \Z_{\geq 1}$. An $m$-fold cover means a central extension of locally compact groups
\[ 1 \to \bmu_m \to \tilde{G} \to G(F) \to 1, \]
where $\bmu_m$ is the group of $m$-th roots of unity in $\CC$. Bernstein's theory generalizes to genuine smooth representations of $\tilde{G}$, i.e.\ the smooth representations of $\tilde{G}$ on which $\bmu_m$ acts tautologically.

Let us clarify the meaning of decomposed Levi subgroups. To begin with, suppose $G$ is not unitary. Levi subgroups $M \subset G$ can be expressed as
\begin{equation}\label{eqn:Levi-intro}
	M = \prod_{i=1}^r \GL_{d_i} \times G^{\flat}
\end{equation}
where $G^{\flat}$ is a smaller classical group of the same type as $G$, possibly trivial. Let $\widetilde{\GL_{d_i}(F)}$ and $\widetilde{G^\flat}$ be the preimages of their $F$-points in $\tilde{M}$, which are again $m$-fold covers. We say $\tilde{M}$ is decomposed if
\begin{equation}\label{eqn:Levi-intro-cover}
	\tilde{M} \simeq \left( \prod_{i=1}^r \widetilde{\GL_{d_i}} \times \widetilde{G^\flat} \right) \bigg/ \left\{ (z_1, \ldots, z_r, z^\flat) \in \bmu_m^{r+1}, \; z_1 \cdots z_r z^\flat = 1 \right\}
\end{equation}
as topological central extensions of $M(F)$ by $\bmu_m$. When $G$ is unitary with respect to a quadratic extension $E|F$, one replaces $\GL_{d_i}$ by the restriction of scalars $\Res_{E|F} \GL_{d_i}$.

Decomposability plays a crucial role in \cite{HW25}; for instance, it is satisfied by all Levi subgroups of the metaplectic twofold extension of $\Sp(2n, F)$, see \textit{loc.\ cit.} On the other hand, decomposability fails for the Kazhdan--Patterson covers \cite{KP84} of $\GL_n(F)$ in general; this failure leads to the theory of metaplectic tensor products.

In view of \cite{HW25}, we are thus led to the following natural question, which makes sense for all local fields $F$ with $\mathrm{char}(F) \neq 2$ and all classical groups $G$ over $F$.

\textbf{Question}. Given $\tilde{G}$ and a Levi subgroup $M \subset G$, how to determine whether $\tilde{M}$ is decomposed or not?

In this note we focus on the $m$-fold covers constructed by Brylinski and Deligne \cite{BD01}, abbreviated as $m$-fold BD-covers, where $m \mid m_F$. Here $m_F$ is the number of roots of unity in $F$ (resp.\ $m_F := 1$) when $F \neq \CC$ (resp.\ when $F = \CC$). After fixing an embedding $\epsilon: \mu_m(F) \hookrightarrow \CC^{\times}$, the BD-covers arise from central extensions
\[ 0 \to \mathbf{K}_2 \to \tilde{\mathcal{G}} \to G \to 1 \]
of sheaves on the big Zariski site of $\Spec(F)$, where $\mathbf{K}_2$ is the sheaf associated with Quillen's K-theory functors. The BD-covers comprise most known covers of arithmetic significance, including the metaplectic group. Gan--Gao--Weissman \cite{GGW} proposed a Langlands program for genuine representations of BD-covers.

\subsection{Main results and strategy}
The main results are summarized below. In what follows, $\tilde{G}$ is assumed to be a BD-cover of $G(F)$.
\begin{itemize}
	\item Suppose that $G$ is the classical group attached to a pair $(W, h)$, where $W$ is a finite-dimensional $F$-vector space and $h: W \times W \to F$ is a non-degenerate bilinear form, in one of the following cases:
	\begin{itemize}
		\item $(W, h)$ is symplectic,
		\item $(W, h)$ is quadratic and $\dim W$ is odd,
		\item $(W, h)$ is quadratic and $\dim W \geq 6$ is even.
	\end{itemize}
	Then $\tilde{M}$ is decomposed for all Levi subgroups $M \subset G$ (Corollary \ref{prop:main-topological}).
	\item If $G$ is a quasisplit unitary group or the split $\SO(4)$, there are examples of BD-covers $\tilde{G}$ in which $\tilde{T}$ is not decomposed (Proposition \ref{prop:unitary-example-F} and Remark \ref{rem:SO4-example-F}). Here $T$ is a minimal Levi subgroup, which is also a maximal $F$-torus in $G$.
	\item If $G$ is a non-split $\SO(4)$, all Levi subgroups are decomposed (Proposition \ref{prop:SO4-example-nonsplit}).
	\item Discussions for full orthogonal groups (Proposition \ref{prop:O}) and $\GSpin(2n+1)$ (Proposition \ref{prop:GSpin}) are also given.
\end{itemize}

The idea goes as follows. The right-hand side of \eqref{eqn:Levi-intro-cover} can be seen as an external Baer sum of covers. To say $\tilde{M}$ is decomposed amounts to saying that $\tilde{M}$ is isomorphic to the external Baer sum of the preimages of $F$-points of factors in \eqref{eqn:Levi-intro}, say through the multiplication map in $\tilde{G}$. For BD-covers, we try to deduce this isomorphism from the $\mathbf{K}_2$-level, namely for the pull-back $\tilde{\mathcal{M}}$ of $\tilde{\mathcal{G}}$ to $M$. In both settings, decomposability is equivalent to that different factors in \eqref{eqn:Levi-intro} commute in the central extension.

Consider any connected reductive $F$-group $G$ with a chosen maximal $F$-torus $T$. Brylinski and Deligne classified the central $\mathbf{K}_2$-extensions of $G$ in terms of data $(Q, \mathcal{D}, \varphi)$ where $Q: Y := \mathrm{X}_*(T_{\overline{F}}) \to \Z$ is a Weyl- and Galois-invariant quadratic form. Now take $T \subset M \subset G$ in our original setting and consider the decomposition \eqref{eqn:Levi-intro}. To decompose $\tilde{\mathcal{M}}$, we are reduced to showing that the corresponding summands in $Y$ are mutually orthogonal relative to $B_Q(y, y') := Q(y+y') - Q(y) - Q(y')$ (Proposition \ref{prop:Baer-orthogonal}), which is elementary. This leads to the $\mathbf{K}_2$-level decomposition (Theorem \ref{prop:main}).

To produce examples of non-decomposed Levi subgroups in $\tilde{G}$ requires more work: one has to express the commutator pairing in BD-covers in terms of Hilbert symbols, or Tate's local duality in the unitary case.

All the required machinery is available from the literature. In particular, we rely on the crucial fact that the Brylinski--Deligne equivalence preserves Baer sums.

On the other hand, since the theory of Brylinski--Deligne \cite{BD01} only concerns connected reductive groups, the results for full orthogonal groups given in \S\ref{sec:full-orthogonal} will be very limited.

\subsection{Layout}
In \S\ref{sec:cext} we recollect the required terminologies for topological central covers, Baer sums and external Baer sums. The Brylinski--Deligne formalism of central $\mathbf{K}_2$-extensions is reviewed in \S\ref{sec:BD}, together with certain auxiliary results proved. In \S\ref{sec:classical-group} we turn to classical groups and prove the main theorems. Counterexamples for unitary groups and $\SO(4)$ are given in \S\ref{sec:counterexample-unitary} and \S\ref{sec:counterexample-SO4}, respectively. Finally, \S\ref{sec:full-orthogonal} and \S\ref{sec:GSpin-odd} discuss the cases of full orthogonal groups and $\GSpin(2n+1)$, respectively.

\subsection{Acknowledgement}
The author is grateful to Volker Heiermann and Chenyan Wu for kindly sharing their work \cite{HW25} and highlighting the issue of decomposability, as well as to Fan Gao for assuring that Levi subgroups ought to decompose in BD-covers of many classical groups.

This work is supported by NSFC (Grants 11922101 and 12321001).

\subsection{Notation and conventions}
For a local field $F$, we let $m_F \in \Z_{\geq 1}$ be the number of roots of unity in $F$, with the convention that $m_{\CC} := 1$. For every $m \mid m_F$ we denote by $(\cdot, \cdot)_{F, m}$ the Hilbert symbol of degree $m$ for $F$. When $F$ is non-Archimedean, the ring of integers (resp.\ its maximal ideal) is written as $\mathfrak{o}_F$ (resp.\ $\mathfrak{p}_F$).

Field extensions are written as $E|F$, and for Galois extensions we denote by $\Gal(E|F)$ its Galois group. The absolute Galois group of $F$ is denoted by $\Gal_F$.

For a field $F$ with $\mathrm{char}(F) \neq 2$, a quadratic (resp.\ symplectic) $F$-vector space means a pair $(V, q)$ where $V$ is a finite-dimensional $F$-vector space and $q: V \times V \to F$ is bilinear, symmetric (resp.\ alternating) and non-degenerate; the shorthand $V$ will also be used. The corresponding orthogonal and special orthogonal groups (resp.\ symplectic groups) are denoted by $\Or(V, q)$ and $\SO(V, q)$ (resp.\ $\Sp(V, q)$), or simply as $\Or(V)$ and $\SO(V)$ (resp.\ $\Sp(V)$), respectively.

Given a quadratic extension $E|F$ with $\mathrm{char}(F) \neq 2$, Hermitian spaces $(W, h)$ (shorthand: $W$) and unitary groups are defined similarly, where $h: W \times W \to E$ is the Hermitian form.

For a scheme $X$ over $\Spec(F)$, we denote by $X(F)$ the set of its $F$-points. For an extension $F' | F$ of fields, we denote by $X_{F'}$ the base-change of $X$ to $F'$. When $[F':F]$ is finite, the functor  $\Res_{F'|F}$ of Weil restriction sends affine $F'$-groups to affine $F$-groups.

When $G$ is a connected reductive $F$-group with a split maximal $F$-torus $T$, we denote by $\mathrm{W}(G, T)$ the Weyl group. Denote by $G_{\mathrm{sc}}$ the simply connected covering of $[G, G]$. Lie algebras are denoted in $\mathfrak{Fraktur}$ letters. The cocharacter (resp.\ character) lattice of $T$ is denoted as $\mathrm{X}_*(T)$ (resp.\ $\mathrm{X}^*(T)$).

For every $m \in \Z_{\geq 1}$, we let $\mu_m$ be the group scheme of $m$-th roots of unity. Put $\bmu_m := \mu_m(\CC)$.

Variables in a polynomial ring are denoted as $\mathsf{X}_1, \mathsf{X}_2, \ldots$.

\section{Topological central extensions}\label{sec:cext}
Given a finite abelian group $A$ endowed with the discrete topology, and a locally compact group $H$, we let $\cate{CExt}(H, A)$ be the category of topological central extensions
\[ 1 \to A \to \mathcal{E} \xrightarrow{\rev} H \to 1 \]
as explained in \cite[1.1 Definition]{GGW}. Morphisms in $\cate{CExt}(H, A)$ are commutative diagrams of the form
\[\begin{tikzcd}
	1 \arrow[r] & A \arrow[equal, d] \arrow[r] & \mathcal{E} \arrow[d, "\theta"] \arrow[r] & H \arrow[equal, d] \arrow[r] & 1 \\
	1 \arrow[r] & A \arrow[r] & \mathcal{E}' \arrow[r] & H \arrow[r] & 1  
\end{tikzcd}\]
where $\theta$ is a continuous group homomorphism. By the discussions in \cite[1.2 Definition]{GGW}, $\cate{CExt}(H, A)$ is a groupoid.

Every continuous homomorphism $f: H' \to H$ (resp.\ $\alpha: A \to A'$) induces a functor $f^*: \cate{CExt}(H, A) \to \cate{CExt}(H', A)$ (resp.\ $\alpha_*: \cate{CExt}(H, A) \to \cate{CExt}(H, A')$): it is the pull-back (resp.\ push-out) of central extensions along $f$ (resp.\ $\alpha$).

\begin{definition}[see {\cite[p.2]{GGW}}]
	Given $\mathcal{E}_1, \mathcal{E}_2$ in $\cate{CExt}(H, A)$, define $\mathcal{E}_1 \dot{+} \mathcal{E}_2$ to be the push-out of the object
	\[ \left\{ (h_1, h_2) \in \mathcal{E}_1 \times \mathcal{E}_2 : \rev_1(h_1) = \rev_2(h_2) \right\} \]
	of $\cate{CExt}(H, A \times A)$ along $\mathrm{mult}: A \times A \to A$. This yields a functor $\dot{+}: \cate{CExt}(H, A) \times \cate{CExt}(H, A) \to \cate{CEXt}(H, A)$, called the Baer sum.
\end{definition}

The operation $\dot{+}$ makes $\cate{CExt}(H, A)$ into a commutative Picard category, whose zero object is given by the split extension $A \times H$. Furthermore, $\dot{+}$ commutes with pull-backs up to a canonical isomorphism.

Given objects $\mathcal{E}_1, \ldots, \mathcal{E}_k$ of $\cate{CExt}(H, A)$, the Baer sum $\mathcal{E}_1 \dot{+} \cdots \dot{+} \mathcal{E}_k$ is defined similarly.

Here comes a variant. Given locally compact groups $H_1, \ldots, H_k$, we denote by $p_i: H_1 \times \cdots \times H_k \to H_i$ the projection map, for all $1 \leq i \leq k$.

\begin{definition}
	Given objects $\mathcal{E}_i$ of $\cate{CExt}(H_i, A)$ for $i = 1, \ldots, k$, put
	\begin{equation*}
		\mathcal{E}_1 \dot{\boxplus} \cdots \dot{\boxplus} \mathcal{E}_k := p_1^* \mathcal{E}_1 \dot{+} \cdots \dot{+} p_k^*\mathcal{E}_k,
	\end{equation*}
	called the external Baer sum of $\mathcal{E}_1, \ldots, \mathcal{E}_k$.
\end{definition}

The definition above is given in terms of ``internal'' Baer sum in $\cate{CExt}(\prod_i H_i, A)$. Alternatively, it can be described directly as the push-out of the central extension
\[ 1 \to A^k \to \prod_{i=1}^k \mathcal{E}_i \to \prod_{i=1}^k H_i \to 1 \]
along $\mathrm{mult}: A^k \to A$, and then $\dot{+}$ on $\cate{CExt}(H, A)$ can be redefined as the pull-back of $\dot{\boxplus}$ (in the case $H_1 = \cdots = H_k = H$) along the diagonal map $H \to H^k$.

\begin{definition}\label{def:comm}
	Given $\mathcal{E}$ in $\cate{CExt}(H, A)$, let $h_1, h_2 \in H$ be commuting elements. Take $\tilde{h}_1 \in \pr^{-1}(h_1)$ and $\tilde{h}_2 \in \pr^{-1}(h_2)$. Then the commutator
	\[ \mathrm{comm}(h_1, h_2) := \tilde{h}_1 \tilde{h}_2 \tilde{h}_1^{-1} \tilde{h}_2^{-1} \]
	lies in $A$, and is independent of the choice of preimages.
\end{definition}

In what follows, we write $\mathbf{1}$ for the constant function $1$ valued in a given group.

\begin{proposition}\label{prop:recognition-Baer}
	Given locally compact groups $H_1, \ldots, H_k$ and $\mathcal{E} \in \cate{CExt}(\prod_i H_i, A)$, let $\mathcal{E}_i$ be the pull-back of $\mathcal{E}$ along $H_i \hookrightarrow \prod_j H_j$. Then $\mathcal{E}$ is isomorphic to $\mathcal{E}_1 \dot{\boxplus} \cdots \dot{\boxplus} \mathcal{E}_k$ if and only if
	\[ \mathrm{comm}|_{H_i \times H_j} = \mathbf{1} \quad \text{for all} \; 1 \leq i \neq j \leq r, \]
	in which case there is a canonical isomorphism
	\[ \mathcal{E}_1 \dot{\boxplus} \cdots \dot{\boxplus} \mathcal{E}_k \rightiso \mathcal{E}. \]
\end{proposition}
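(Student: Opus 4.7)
The plan is to split this into the two implications, of which ``if'' is the substantive one. For the ``only if'' direction, I would use the alternative description of the external Baer sum given after the definition: $\mathcal{E}_1 \dot{\boxplus} \cdots \dot{\boxplus} \mathcal{E}_k$ is the push-out of the direct product $\prod_{l=1}^k \mathcal{E}_l$, viewed as a central extension of $\prod_l H_l$ by $A^k$, along $\mathrm{mult}: A^k \to A$. For $h_i \in H_i$ and $h_j \in H_j$ with $i \neq j$, lifts of the form $(1, \ldots, \tilde{h}_i, \ldots, 1)$ and $(1, \ldots, \tilde{h}_j, \ldots, 1)$ commute tautologically in $\prod_l \mathcal{E}_l$, and commutativity is preserved by the push-out. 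Hence $\mathrm{comm}|_{H_i \times H_j} = \mathbf{1}$ in any $\mathcal{E}$ isomorphic to such a Baer sum.

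For the ``if'' direction, assume $\mathrm{comm}|_{H_i \times H_j} = \mathbf{1}$ for all $i \neq j$. The defining pullback square of $\mathcal{E}_i$ provides a canonical continuous homomorphism $q_i: \mathcal{E}_i \to \mathcal{E}$ restricting to the identity on $A$ and lifting the inclusion $H_i \hookrightarrow \prod_l H_l$. I would then define
\[ \Phi: \prod_{l=1}^k \mathcal{E}_l \longrightarrow \mathcal{E}, \qquad (\tilde{h}_1, \ldots, \tilde{h}_k) \longmapsto q_1(\tilde{h}_1) \cdots q_k(\tilde{h}_k). \]
The key verification is that $\Phi$ is a group homomorphism. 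Since the source is a direct product, this reduces to checking that $q_i(\mathcal{E}_i)$ and $q_j(\mathcal{E}_j)$ commute elementwise in $\mathcal{E}$ whenever $i \neq j$. For $x \in q_i(\mathcal{E}_i)$ and $y \in q_j(\mathcal{E}_j)$, the projections $\pr(x) \in H_i$ and $\pr(y) \in H_j$ commute in $\prod_l H_l$, so the commutator $[x, y]$ lies in $A$; by Definition \ref{def:comm} and the independence-of-lifts assertion therein, it equals $\mathrm{comm}(\pr(x), \pr(y))$, which vanishes by hypothesis.

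It remains to observe that $\Phi$ is continuous and restricts on the kernel $A^k \subset \prod_l \mathcal{E}_l$ to the multiplication $\mathrm{mult}: A^k \to A$. By the universal property of the push-out, $\Phi$ therefore factors through a morphism $\mathcal{E}_1 \dot{\boxplus} \cdots \dot{\boxplus} \mathcal{E}_k \to \mathcal{E}$ in $\cate{CExt}(\prod_l H_l, A)$, which is automatically an isomorphism since $\cate{CExt}(\prod_l H_l, A)$ is a groupoid. The construction, being built from the canonical maps $q_i$, is visibly functorial in $\mathcal{E}$, yielding the last clause of the proposition. No serious obstacle is anticipated; the only delicate point is the identification of the commutator $[q_i(\tilde{h}_i), q_j(\tilde{h}_j)]$ in $\mathcal{E}$ with the pairing $\mathrm{comm}$ of Definition \ref{def:comm}, which is precisely what the lift-independence clause of that definition furnishes.
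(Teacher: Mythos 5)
Your proposal is correct and follows essentially the same route as the paper: the ``only if'' direction is read off from the direct (push-out) construction of $\dot{\boxplus}$, and the ``if'' direction uses the multiplication map $\prod_l \mathcal{E}_l \to \mathcal{E}$, which is a homomorphism precisely because the cross-commutators vanish, restricts to $\mathrm{mult}$ on $A^k$, and hence descends to a morphism in the groupoid $\cate{CExt}(\prod_l H_l, A)$, automatically an isomorphism. The only cosmetic difference is that the paper treats $k=2$ and declares the general case analogous, whereas you handle general $k$ directly.
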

\begin{proof}
	To begin with, assume $k = 2$. The ``only if'' direction is trivial: $\mathrm{comm}|_{H_1 \times H_2} = \mathbf{1}$ holds for $\mathcal{E}_1 \dot{\boxplus} \mathcal{E}_2$ by the direct construction of $\dot{\boxplus}$.
	
	For the ``if'' direction, the multiplication map $\mathcal{E}_1 \times \mathcal{E}_2 \to \mathcal{E}$ within $\mathcal{E}$ is a continuous homomorphism of groups since $\mathrm{comm}|_{H_1 \times H_2} = \mathbf{1}$. It is trivial on the anti-diagonal of $A \times A$. Passing to quotients, we obtain a canonical morphism
	\[ \mathcal{E}_1 \dot{\boxplus} \mathcal{E}_2 \to \mathcal{E} \]
	in the groupoid $\cate{CExt}(H_1 \times H_2, A)$.
	
	The case for general $k$ is analogous; it can also be proved inductively from the $k=2$ case.
\end{proof}

\section{Brylinski--Deligne theory and Baer sums}\label{sec:BD}
Let $F$ be a field and $G$ be a connected reductive $F$-group. In \cite{BD01}, Brylinski and Deligne considered central extensions
\begin{equation}\label{eqn:K2-CExt}
	0 \to \mathbf{K}_2 \to \tilde{\mathcal{G}} \to G \to 1
\end{equation}
of sheaves on the big Zariski site $(\Spec F)_{\mathrm{Zar}}$, where $\mathbf{K}_j$ stands for the Zariski sheaf of abelian groups associated with the presheaf $U \mapsto K_j(U)$ of Quillen's higher $K$-theory for schemes, for all $j \in \Z_{\geq 0}$.

Central extensions can be studied in any topoi; we refer to \cite[p.3769]{Weis16} for a summary. In particular, as in the topological setting of \S\ref{sec:cext}, such central extensions form a category $\cate{CExt}(G, \mathbf{K}_2)$ and has the following features.
\begin{itemize}
	\item Every homomorphism $f: G' \to G$ induces the pull-back functor
	\[ f^*: \cate{CExt}(G, \mathbf{K}_2) \to \cate{CExt}(G', \mathbf{K}_2), \quad (f_1 f_2)^* \simeq f_2^* f_1^* . \]
	\item The operation of Baer sums $\dot{+}$ makes $\cate{CExt}(G, \mathbf{K}_2)$ into a commutative Picard category, with zero object being the split extension. There is also a notion of external Baer sums $\dot{\boxplus}$.
	\item Objects of $\cate{CExt}(G, \mathbf{K}_2)$ can also be viewed as multiplicative $\mathbf{K}_2$-torsors over $G$, see \cite[\S 1]{BD01}. In this way, we see that $\cate{CExt}(G, \mathbf{K}_2)$ is a groupoid: every morphism therein is a morphism between $\mathbf{K}_2$-torsors, hence invertible.
	\item The $\mathbf{K}_2$-valued commutator map $\mathrm{comm}$ is defined on the scheme $\{ (g_1, g_2) \in G^2 : g_1 g_2 = g_2 g_1 \}$, cf.\ Definition \ref{def:comm}.
\end{itemize}

In fact, \cite{BD01} allows more general bases than $\Spec F$, and $\cate{CExt}(G, \mathbf{K}_2)$ has functoriality for base change.

Taking $F$-points in \eqref{eqn:K2-CExt} yields a central extension of groups
\begin{equation}\label{eqn:K2-CExt-F}
	0 \to K_2(F) \to \tilde{\mathcal{G}}(F) \to G(F) \to 1.
\end{equation}

Now assume $F$ is a local field; recall that $m_F \in \Z_{\geq 1}$ is the number of roots of unity in $F$ if $F \neq \CC$, and $m_{\CC} := 1$. Assume $m \mid m_F$. Pushing-out \eqref{eqn:K2-CExt-F} along the Hilbert symbol $(\cdot, \cdot)_{F, m}$, we obtain a central extension of $G(F)$ by $\mu_m(F)$, denoted by
\[ 1 \to \mu_m(F) \to \tilde{G} \to G(F) \to 1. \]
According to \cite[\S 10]{BD01}, the above is naturally a topological central extension. Upon fixing an embedding $\epsilon: \mu_m(F) \hookrightarrow \CC^{\times}$, one may identify $\tilde{G}$ as an object of $\cate{CExt}(G(F), \bmu_m)$.

\begin{proposition}\label{prop:F-pts-Baer}
	The functor $\cate{CExt}(G, \mathbf{K}_2) \to \cate{CExt}(G(F), \bmu_m)$ so obtained respects pull-backs and Baer sums of central extensions, i.e.\ it is a braided monoidal or ``additive'' functor between commutative Picard categories.
	
	Furthermore, the functor preserves external Baer sums.
\end{proposition}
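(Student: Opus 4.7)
The plan is to factor the stated functor as three successive functors: (i) the $F$-points functor $\Gamma: \cate{CExt}(G, \mathbf{K}_2) \to \cate{CExt}(G(F), K_2(F))$; (ii) the push-out along the Hilbert symbol $(\cdot, \cdot)_{F, m}: K_2(F) \to \mu_m(F)$; and (iii) the transport of structure along $\epsilon: \mu_m(F) \hookrightarrow \CC^{\times}$ combined with the attachment of the natural topology from \cite[\S 10]{BD01}. Steps (ii) and (iii) are essentially formal: in any category of central extensions, push-out along a homomorphism of coefficient groups defines an additive functor that commutes with pull-backs and Baer sums (as is clear from the universal-property definitions); likewise for transport of structure along $\epsilon$ and for the functorial attachment of topology. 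Hence the real content lies in step (i).

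For step (i), compatibility with pull-backs is immediate, since $\Gamma$ is the sections-over-$\Spec F$ functor on the big Zariski topos and therefore preserves finite limits, in particular fiber products. For Baer sums, recall that $\mathcal{E}_1 \dot{+} \mathcal{E}_2$ in $\cate{CExt}(G, \mathbf{K}_2)$ is defined as the push-out of the fiber product $\mathcal{E}_1 \times_G \mathcal{E}_2$ along the multiplication $\mathbf{K}_2 \times \mathbf{K}_2 \to \mathbf{K}_2$, which amounts to taking the quotient by the anti-diagonal action of $\mathbf{K}_2$. Left-exactness gives $\Gamma(\mathcal{E}_1 \times_G \mathcal{E}_2) \simeq \mathcal{E}_1(F) \times_{G(F)} \mathcal{E}_2(F)$. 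The crucial ingredient is then the exactness of \eqref{eqn:K2-CExt-F}, i.e.\ the surjectivity $\tilde{\mathcal{G}}(F) \twoheadrightarrow G(F)$, which guarantees that $\tilde{\mathcal{G}}(F)$ genuinely carries a $K_2(F)$-torsor structure over $G(F)$. Applied to the Baer sum construction, this forces the $F$-points of the sheaf-theoretic quotient by the anti-diagonal $\mathbf{K}_2$-action to coincide with the set-theoretic quotient by $K_2(F)$; thus one obtains a canonical isomorphism $\Gamma(\mathcal{E}_1 \dot{+} \mathcal{E}_2) \simeq \Gamma(\mathcal{E}_1) \dot{+} \Gamma(\mathcal{E}_2)$.

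Finally, the external Baer sum statement is a direct corollary: by the reformulation at the end of \S\ref{sec:cext}, one has $\mathcal{E}_1 \dot{\boxplus} \cdots \dot{\boxplus} \mathcal{E}_k = p_1^* \mathcal{E}_1 \dot{+} \cdots \dot{+} p_k^* \mathcal{E}_k$, and both pull-backs and internal Baer sums are preserved by the previous paragraphs. The main point one must verify carefully is the Baer sum compatibility in step (i), where an a priori gap threatens to appear between the sheaf-theoretic quotient and the set-theoretic one; this is plugged by the surjectivity in \eqref{eqn:K2-CExt-F}, a nontrivial input from \cite{BD01} resting on Zariski cohomological vanishing over local fields. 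Once this is granted, the remaining verifications are routine category-theoretic bookkeeping.
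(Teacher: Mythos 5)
Your proof is correct and follows essentially the same route as the paper: reduce to the $F$-points functor (the push-out along the Hilbert symbol and the identification via $\epsilon$ being formal), check pull-backs via left-exactness of taking sections, and check Baer sums by showing that the push-out along $\mathbf{K}_2 \times \mathbf{K}_2 \to \mathbf{K}_2$ commutes with taking sections over $\Spec F$. One small correction: the surjectivity $\tilde{\mathcal{G}}(F) \twoheadrightarrow G(F)$ that you invoke is not a ``nontrivial input resting on Zariski cohomological vanishing over local fields'' --- it holds over \emph{any} field simply because $\Spec F$ is a point for the Zariski topology (so every $\mathbf{K}_2$-torsor over it is trivial and sheaf quotients are naive quotients), which is exactly the elementary observation the paper's proof uses.
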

\begin{proof}
	Consider a homomorphism $f: G' \to G$. The pull-back of $\tilde{\mathcal{G}}$ along $f$ is $\tilde{\mathcal{G}}' = \tilde{\mathcal{G}} \dtimes{G} G'$ and we have $\tilde{\mathcal{G}}'(F) = \tilde{\mathcal{G}}(F) \dtimes{G(F)} G'(F)$, in which $K_2(F) = \mathbf{K}_2(\Spec F)$ embeds as $a \mapsto (a, 1)$. Pushing-out along $K_2(F) \to \mu_m(F)$ only affects the first component, thus gives rise to $\tilde{G} \dtimes{G(F)} G'(F)$, the pull-back of $\tilde{G}$ along $G'(F) \to G(F)$.
	
	For Baer sums, it suffices to consider $\tilde{\mathcal{G}}_1 \dot{+} \tilde{\mathcal{G}}_2$. Its formation involves: (a) some projective limits of Zariski sheaves, and (b) pushing-out along the addition map $\mathbf{K}_2 \times \mathbf{K}_2 \to \mathbf{K}_2$. As (a) commutes with taking $F$-points, it remains to show that (b) has the same property. But this is immediate since $\Spec F$ is just a point for the Zariski topology.
	
	Finally, external Baer sums are described in terms of $\dot{+}$ and certain pull-backs of central extensions, hence are preserved as well.
\end{proof}

\begin{definition}
	Objects of $\cate{CExt}(G(F), \bmu_m)$ arising from $\cate{CExt}(G, \mathbf{K}_2)$ are called $m$-fold BD-covers.
\end{definition}

Return to the case of a general field $F$ with chosen separable closure $\overline{F}$. Let $G$ be a connected reductive $F$-group. Fix a maximal $F$-torus $T$ of $G$, with preimage $T_{\mathrm{sc}}$ in $G_{\mathrm{sc}}$, and let $Y$ (resp.\ $Y_{\mathrm{sc}}$) be $\mathrm{X}_*(T_{\overline{F}})$ (resp.\ $\mathrm{X}_*(T_{\mathrm{sc}, \overline{F}})$). In order to understand $\cate{CExt}(G, \mathbf{K}_2)$, we consider triples $(Q, \mathcal{D}, \varphi)$ where:
\begin{itemize}
	\item $Q: Y \to \Z$ is an integer-valued quadratic form, so that
	\begin{equation}\label{eqn:BQ}
		B_Q(y_1, y_2) := Q(y_1 + y_2) - Q(y_1) - Q(y_2)
	\end{equation}
	is a bilinear form $Y \times Y \to \Z$, which is required to be $\mathrm{W}(G_{\overline{F}}, T_{\overline{F}}) \rtimes \Gal_F$-invariant;
	\item $1 \to \Gm \to \mathcal{D} \to Y \to 0$ is a central extension of sheaves of abelian groups on the site $(\Spec F)_{\text{ét}}$ satisfying
	\begin{equation}\label{eqn:comm-D}
		(-1)^{B_Q(y_1, y_2)} = \mathrm{comm}(y_1, y_2)
	\end{equation}
	for all $y_1, y_2 \in Y$;
	\item by \cite[Proposition 3.11 and Compatibility 4.9]{BD01}, there is a natural central extension
	\[ 1 \to \Gm \to \mathcal{D}_{\mathrm{sc}} \to Y_{\mathrm{sc}} \to 0 \]
	attached to $Q|_{Y_{\mathrm{sc}}}$, and $\varphi: \mathcal{D}_{\mathrm{sc}} \to \mathcal{D}$ is morphism of sheaves of abelian groups on $(\Spec F)_{\text{ét}}$, such that the diagram
	\[\begin{tikzcd}
		1 \arrow[r] & \Gm \arrow[equal, d] \arrow[r] & \mathcal{D}_{\mathrm{sc}} \arrow[d, "\varphi"] \arrow[r] & Y_{\mathrm{sc}} \arrow[hookrightarrow, d] \arrow[r] & 0 \\
		1 \arrow[r] & \Gm \arrow[r] & \mathcal{D} \arrow[r] & Y \arrow[r] & 1
	\end{tikzcd}\]
	commutes.
\end{itemize}

The data above satisfy étale descent by definition. They form a category $\cate{BD}(G, T)$: morphisms from $(Q, \mathcal{D}, \varphi)$ to $(Q', \mathcal{D}', \varphi')$ exist only when $Q = Q'$, in which case they are defined as morphisms of central extensions $\mathcal{D} \to \mathcal{D}'$ making the obvious diagram commute.

Baer sums of triples $(Q_i, \mathcal{D}_i, \varphi_i)$ (for $i = 1, \ldots, k$) can be defined: simply take $Q := \sum_i Q_i$, $\mathcal{D} := \dot{+}_i \mathcal{D}_i$ (Baer sum of central extensions of sheaves), and $\varphi: \mathcal{D}_{\mathrm{sc}} \to \mathcal{D}$ is induced by the functoriality of Baer sums (one can show that $\mathcal{D}_{\mathrm{sc}} = \dot{+}_i \mathcal{D}_{i, \mathrm{sc}}$ since $Q = \sum_i Q_i$). This makes $\cate{BD}(G, T)$ into a commutative Picard category. See \cite[\S 2.1]{Weis16}.

Using Weyl-invariance, it is easily seen that different choices of $T$ give rise to equivalent $\cate{BD}(G, T)$.

\begin{theorem}\label{prop:BD}
	There is a natural equivalence of commutative Picard categories $\cate{CExt}(G, \mathbf{K}_2) \rightiso \cate{BD}(G, T)$.
	
	Furthermore, suppose $f: G' \to G$ is a homomorphism between connected reductive $F$-groups and $T' \subset G'$ is a maximal $F$-torus such that $f(T') \subset T$ and $f$ induces $\mathrm{W}(G'_{\overline{F}}, T'_{\overline{F}}) \to \mathrm{W}(G_{\overline{F}}, T_{\overline{F}})$, then the diagram
	\[\begin{tikzcd}
		\cate{CExt}(G, \mathbf{K}_2) \arrow[r, "\sim"] \arrow[d] & \cate{BD}(G, T) \arrow[d] \\
		\cate{CExt}(G', \mathbf{K}_2) \arrow[r, "\sim"'] & \cate{BD}(G', T')
	\end{tikzcd}\]
	commutes up to a natural isomorphism, where the vertical functors are pull-backs. 
\end{theorem}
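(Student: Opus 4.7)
The plan is to invoke the Brylinski--Deligne classification and to verify its functoriality. For the first statement, I would construct the functor $\cate{CExt}(G, \mathbf{K}_2) \to \cate{BD}(G, T)$ concretely as follows. Given $\tilde{\mathcal{G}}$, pull it back along $T \hookrightarrow G$ to obtain $\tilde{\mathcal{T}}$, whose classification in the torus case of \cite[\S 3]{BD01} yields a Galois-invariant integral quadratic form $Q$ on $Y$ together with a central extension $\mathcal{D}$ of $Y$ by $\Gm$ on $(\Spec F)_{\text{ét}}$ satisfying \eqref{eqn:comm-D}. Weyl invariance of $Q$ is forced: any element of $\mathrm{W}(G_{\overline{F}}, T_{\overline{F}})$ lifts to an $\overline{F}$-point of $N_G(T)$, and conjugation by this lift preserves $\tilde{\mathcal{G}}|_{T_{\overline F}}$ up to canonical isomorphism, which translates into Weyl invariance of $Q$. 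For the datum $\varphi$, pull $\tilde{\mathcal{G}}$ back successively along $G_{\mathrm{sc}} \to G$ and $T_{\mathrm{sc}} \hookrightarrow G_{\mathrm{sc}}$; by \cite[Proposition 3.11, Compatibility 4.9]{BD01}, the resulting torus extension is canonically $\mathcal{D}_{\mathrm{sc}}$ determined by $Q|_{Y_{\mathrm{sc}}}$, and comparing the two pull-back routes $T_{\mathrm{sc}} \to T \to G$ and $T_{\mathrm{sc}} \to G_{\mathrm{sc}} \to G$ produces $\varphi: \mathcal{D}_{\mathrm{sc}} \to \mathcal{D}$.

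The essential surjectivity and full faithfulness of this functor are the core content of the Brylinski--Deligne classification, so I would simply cite \cite[\S 7]{BD01}; the argument there proceeds by étale descent, reduction to the split quasisplit case, and explicit cocycle computations for split simply connected groups. Preservation of Baer sums is then a formal check: on both sides, $\dot{+}$ is defined by the same universal constructions (push-out along addition of a fibre product over the base), and the equivalence intertwines them by the naturality already established. External Baer sums are preserved because $\dot{\boxplus}$ is expressible through $\dot{+}$ and pull-backs, both of which are preserved.

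For the second statement, I would argue that every step in the construction above is natural in the pair $(G, T)$: pull-back along $f: G' \to G$ of a central $\mathbf{K}_2$-extension restricts, on the torus side, to pull-back along $f|_{T'}: T' \to T$, and on the simply connected side, to pull-back along the induced $f_{\mathrm{sc}}: G'_{\mathrm{sc}} \to G_{\mathrm{sc}}$. The Weyl-group hypothesis is used to ensure that the pulled-back quadratic form $Q' := f^* Q$ on $Y' = \mathrm{X}_*(T'_{\overline{F}})$ is $\mathrm{W}(G'_{\overline{F}}, T'_{\overline{F}}) \rtimes \Gal_F$-invariant: without such a hypothesis, one would only know invariance under the image of the Weyl group of $(G, T)$. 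The extensions $\mathcal{D}'$, $\mathcal{D}'_{\mathrm{sc}}$ and the morphism $\varphi'$ are then transported by pull-back along the squares relating $(G', T', T'_{\mathrm{sc}}, G'_{\mathrm{sc}})$ to $(G, T, T_{\mathrm{sc}}, G_{\mathrm{sc}})$.

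The main obstacle, if one wished to give a self-contained proof rather than invoke \cite{BD01}, is essential surjectivity in the first statement: reconstructing $\tilde{\mathcal{G}}$ from $(Q, \mathcal{D}, \varphi)$ requires a genuinely global argument over $G$, and is the technical heart of \emph{loc.\ cit.} For the functoriality statement, the only subtle point is compatibility of pull-back with $\varphi$; this reduces to the fact that the construction $Q \mapsto \mathcal{D}_{\mathrm{sc}}$ is natural in the simply connected group together with a maximal torus, which is built into \cite[\S 4]{BD01} through explicit étale-local rigidifications depending only on the quadratic form.
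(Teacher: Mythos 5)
Your proposal is correct and follows essentially the same route as the paper, which simply cites \cite[Theorem 7.2]{BD01} for the equivalence, \cite[Theorem 2.2]{Weis16} for compatibility with Baer sums, and \cite[Proposition 4.3]{Cai23} for the functoriality under pull-back; you reconstruct the functor and argue the additivity and naturality by hand, but defer the genuine content (essential surjectivity and full faithfulness) to \emph{loc.\ cit.} just as the paper does. The only caveat is that calling the preservation of Baer sums ``a formal check'' understates why the paper cites Weissman's theorem for it, since the BD equivalence is built by descent and case analysis rather than by a single universal construction; but this does not affect correctness.
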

\begin{proof}
	The first part is the main result of Brylinski--Deligne \cite[Theorem 7.2]{BD01}; the assertion about Baer sums is in \cite[Theorem 2.2]{Weis16}.
	
	The second part can be found in \cite[Proposition 4.3]{Cai23}.
\end{proof}

Until the end of this section, we consider connected reductive $F$-groups $G_1, \ldots, G_k$. Put $G := \prod_i G_i$ with projection maps $p_i: G \to G_i$ for $1 = 1, \ldots, k$. Fix maximal $F$-tori $T_i \subset G_i$ and let $Y_i = \mathrm{X}_*(T_{i, \overline{F}})$ for each $i$; put $T := \prod_i T_i$ and $Y := \bigoplus_i Y_i$.

Defines the external Baer sum of objects $(Q_i, \mathcal{D}_i, \varphi_i) \in \cate{BD}(G_i, T_i)$ as the Baer sum of their pull-backs along $p_i$ (over $i = 1, \ldots, k$) in $\cate{BD}(G, T)$, again denoted by $\dot{\boxplus}$. In particular, the datum $Q: Y \to \Z$ is the orthogonal sum of $Q_1, \ldots, Q_k$.

\begin{corollary}\label{prop:BD-external}
	The equivalence in Theorem \ref{prop:BD} preserves external Baer sums.
\end{corollary}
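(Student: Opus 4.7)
The plan is to reduce the corollary to the two parts of Theorem \ref{prop:BD} already on record, namely the compatibility of the Brylinski--Deligne equivalence with pull-backs along suitable homomorphisms and with internal Baer sums. Indeed, the external Baer sum $\dot{\boxplus}$ is by definition expressed, on both the $\cate{CExt}(G, \mathbf{K}_2)$-side and the $\cate{BD}(G, T)$-side, as the internal Baer sum of the pull-backs along the projections $p_i: G \to G_i$. Hence it suffices to show that the equivalence commutes with each $p_i^*$ and with $\dot{+}$.

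For the pull-back part, the plan is to verify that $f = p_i$, $G' = G_i$ and $T' = T_i$ satisfy the hypotheses of the second part of Theorem \ref{prop:BD}. The equality $p_i(T) = T_i$ is immediate from $T = \prod_j T_j$; and the induced map on Weyl groups $\mathrm{W}(G_{\overline{F}}, T_{\overline{F}}) = \prod_j \mathrm{W}(G_{j, \overline{F}}, T_{j, \overline{F}}) \to \mathrm{W}(G_{i, \overline{F}}, T_{i, \overline{F}})$ is just the $i$-th projection, as desired. Consequently the square relating $p_i^*$ across the equivalence commutes up to a canonical natural isomorphism.

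For the Baer-sum part, the first part of Theorem \ref{prop:BD} asserts that the equivalence is one of commutative Picard categories, so it is braided monoidal with respect to $\dot{+}$ up to coherent natural isomorphism. Combining these two compatibilities for each of $\mathcal{E}_1, \ldots, \mathcal{E}_k$ yields a chain of canonical isomorphisms transporting $p_1^* \mathcal{E}_1 \dot{+} \cdots \dot{+} p_k^* \mathcal{E}_k$ on one side to the analogous expression on the other, which is exactly the assertion that external Baer sums are preserved.

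I do not anticipate any real obstacle; the corollary is essentially a formal consequence of Theorem \ref{prop:BD}. The only point worth writing down explicitly is the verification of the Weyl-group hypothesis for the projections $p_i$, which is otherwise transparent.
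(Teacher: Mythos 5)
Your proposal is correct and follows essentially the same route as the paper: both reduce the statement to the two parts of Theorem \ref{prop:BD} (compatibility with internal Baer sums and with pull-backs), using that $\dot{\boxplus}$ is by definition $p_1^*(\cdot) \dot{+} \cdots \dot{+} p_k^*(\cdot)$ on both sides. Your explicit check that the projections $p_i$ satisfy the torus and Weyl-group hypotheses of the second part of Theorem \ref{prop:BD} is a detail the paper leaves implicit, but it is the right verification and nothing more is needed.
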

\begin{proof}
	Suppose that $\tilde{\mathcal{G}}_i \in \cate{CExt}(G_i, \mathbf{K}_2)$ corresponds to $(Q_i, \mathcal{D}_i, \varphi_i)$ for each $1 \leq i \leq k$. Recall that
	\[ \tilde{\mathcal{G}} := \tilde{\mathcal{G}}_1 \dot{\boxplus} \cdots \dot{\boxplus} \tilde{\mathcal{G}}_k \simeq p_1^* \tilde{\mathcal{G}}_1 \dot{+} \cdots \dot{+} p_k^* \tilde{\mathcal{G}}_k. \]
	
	The equivalence in the first part of Theorem \ref{prop:BD} preserves $\dot{+}$; it is also compatible with $p_i^*$ by the second part. Hence $\tilde{\mathcal{G}}$ corresponds to the external Baer sum of $(Q_i, \mathcal{D}_i, \varphi_i)_{i=1}^k$. 
\end{proof}

In the next result, we still assume $G = \prod_{i=1}^k G_i$, embed $G_i$ (resp.\ $T_i$) into $G$ (resp.\ $T$) for each $i$. Given $(Q, \mathcal{D}, \varphi) \in \cate{BD}(G, T)$, we denote its pull-back to $\cate{BD}(G_i, T_i)$ by $(Q_i, \mathcal{D}_i, \varphi_i)$, for each $i$.

\begin{proposition}\label{prop:Baer-orthogonal}
	An object $(Q, \mathcal{D}, \varphi)$ of $\cate{BD}(G, T)$ is isomorphic to the external Baer sum of $(Q_i, \mathcal{D}_i, \varphi_i)$, for $i = 1, \ldots, k$, if and only if $Y_1, \ldots, Y_k$ are mutually orthogonal relative to $B_Q$.
\end{proposition}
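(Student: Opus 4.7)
The plan is to decouple the quadratic-form condition from the extension-theoretic construction, then transport the recognition argument of Proposition \ref{prop:recognition-Baer} to the sheaf setting.

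First, by construction the external Baer sum $\dot{\boxplus}_i (Q_i, \mathcal{D}_i, \varphi_i)$ has the orthogonal sum $Q_1 \oplus \cdots \oplus Q_k$ as its underlying quadratic form on $Y = \bigoplus_i Y_i$. From the expansion
\[
Q\Bigl( \sum_{i=1}^k y_i \Bigr) = \sum_{i=1}^k Q_i(y_i) + \sum_{1 \le i < j \le k} B_Q(y_i, y_j), \qquad y_i \in Y_i,
\]
combined with the freedom to vary each $y_i$ independently, one sees that $Q = Q_1 \oplus \cdots \oplus Q_k$ on $Y$ if and only if $B_Q(Y_i, Y_j) = 0$ for all $i \neq j$. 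This settles the ``only if'' direction and reduces the ``if'' direction, under orthogonality, to constructing an isomorphism $\mathcal{D} \simeq \dot{+}_i \, p_i^* \mathcal{D}_i$ that is compatible with the $\varphi$-data.

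Granting orthogonality, the commutator formula \eqref{eqn:comm-D} yields $\mathrm{comm}(y_i, y_j) = (-1)^{B_Q(y_i, y_j)} = 1$ in $\Gm$ for $y_i \in Y_i$, $y_j \in Y_j$ with $i \neq j$; hence local lifts in $\mathcal{D}$ of elements from distinct $Y_i$'s commute. Since $\dot{+}_i \, p_i^* \mathcal{D}_i$ is by definition the quotient of the fiber product $\prod_i p_i^* \mathcal{D}_i$ over $Y$ by the kernel of the multiplication map $\Gm^k \to \Gm$, the multiplication in $\mathcal{D}$ induces---exactly as in the proof of Proposition \ref{prop:recognition-Baer}, but now at the sheaf level---a well-defined morphism $\dot{+}_i \, p_i^* \mathcal{D}_i \to \mathcal{D}$ of central extensions of $Y$ by $\Gm$. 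It is the identity on $\Gm$ and on $Y$, hence an isomorphism by the five lemma.

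For $\varphi$-compatibility, since $\mathcal{D}_{\mathrm{sc}}$ depends functorially on $Q|_{Y_{\mathrm{sc}}} = \bigoplus_i Q_i|_{Y_{i,\mathrm{sc}}}$, the universal construction recalled from \cite[Proposition 3.11, Compatibility 4.9]{BD01} identifies $\mathcal{D}_{\mathrm{sc}}$ with the external Baer sum of the $\mathcal{D}_{i,\mathrm{sc}}$'s. Then $\varphi$ and the morphism induced from the $\varphi_i$'s via the Baer-sum structure are both morphisms of central extensions $\mathcal{D}_{\mathrm{sc}} \to \mathcal{D}$ over $Y_{\mathrm{sc}} \to Y$ whose restrictions to each $\mathcal{D}_{i,\mathrm{sc}}$ coincide with $\varphi_i$, and so must agree. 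The main obstacle I anticipate is the sheaf-theoretic phrasing of the multiplication argument from Proposition \ref{prop:recognition-Baer}: one has to work with fiber products and morphisms of sheaves of abelian groups rather than with set-theoretic elements. Conceptually this is routine---central extensions of sheaves behave formally like those of groups, as emphasized in \cite[\S 1]{BD01}---but it calls for careful bookkeeping.
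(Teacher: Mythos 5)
Your proposal is correct and follows essentially the same route as the paper: the ``only if'' direction comes from comparing underlying quadratic forms, and the ``if'' direction uses the commutator formula \eqref{eqn:comm-D} to decompose $\mathcal{D}$ as in Proposition \ref{prop:recognition-Baer}, then decomposes $\mathcal{D}_{\mathrm{sc}}$ via its functorial dependence on $Q|_{Y_{\mathrm{sc}}}$ and checks that $\varphi$ restricts to the $\varphi_i$. The only difference is that you spell out the ``only if'' direction and the sheaf-level bookkeeping in more detail than the paper does.
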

\begin{proof}
	What needs proof is the ``if'' part. First off, suppose $G = G_{\mathrm{sc}}$, then $(Q, \mathcal{D}, \varphi)$ (resp.\ $(Q_i, \mathcal{D}_i, \varphi_i)$) amounts to the single datum $Q$ (resp.\ $Q_i$). As $Q$ is the orthogonal sum of $Q_1, \ldots, Q_k$, it follows that $(Q, \mathcal{D}, \varphi)$ is the desired external Baer sum.
	
	For general $G$, note that $\mathcal{D}_1, \ldots, \mathcal{D}_k$ commute with each other inside $\mathcal{D}$ by \eqref{eqn:comm-D}, hence their external Baer sum is $\mathcal{D}$ (cf.\ Proposition \ref{prop:recognition-Baer}). Note that $\mathcal{D}_{\mathrm{sc}}$ is also the external Baer sum of various $\mathcal{D}_{i, \mathrm{sc}}$ by the previous paragraph. Also, $\varphi$ restricts to $\varphi_i: \mathcal{D}_{i, \mathrm{sc}} \to \mathcal{D}_i$ for each $1 \leq i \leq k$. Summing up, we obtain a morphism from $\dot{\boxplus}_{i=1}^k (Q_i, \mathcal{D}_i, \varphi_i)$ to $(Q, \mathcal{D}, \varphi)$.
\end{proof}

\section{Classical groups and their covers}\label{sec:classical-group}
Consider a field $F$ with $\mathrm{char}(F) \neq 2$, together with an extension of fields $E | F$ which is either quadratic or satisfies $E = F$. We consider the data $(W, h)$ where
\begin{itemize}
	\item $W$ together with $h: W \times W \to E$ is either an Hermitian (if $[E:F] = 2$), symplectic or quadratic vector space (if $E = F$);
	\item $G := G(W, h)$ (or simply $G(W)$) is the unitary, symplectic, or special orthogonal group attached to $(W, h)$.
\end{itemize}
These are the classical groups over $F$ under consideration in this article. The general linear groups are excluded here.

The Levi subgroups of $G$ take the form
\begin{equation}\label{eqn:Levi}
	M = \prod_{i=1}^r \Res_{E|F} \GL_{d_i} \times G^\flat, \quad G^\flat := G(W^\flat),
\end{equation}
which is obtained from an orthogonal decomposition
\begin{equation}
	W = \bigoplus_{i=1}^r (\ell_i \oplus \ell_i^*) \oplus W^\flat, \quad r \in \Z_{\geq 0},
\end{equation}
where $\ell_i \oplus \ell_i^*$ and $W^\flat$ are non-degenerate $E$-subspaces of $W$, and both $\ell_i$ and $\ell_i^*$ are totally isotropic with $d_i := \dim_E \ell_i = \dim_E \ell^*_i \in \Z_{\geq 1}$, for all $1 \leq i \leq r$.

Fix a separable closure $\overline{F}$ of $F$. In order to understand pull-backs to $M$ of central $\mathbf{K}_2$-extensions of $G$, we use the following tool.

\begin{lemma}\label{prop:main-aux}
	Assume $F = \overline{F}$. Let $G = G(W, h)$ be a classical group over $F$ in one of the following cases:
	\begin{itemize}
		\item $(W, h)$ is symplectic, $\dim W > 0$,
		\item $(W, h)$ is quadratic and $\dim W \geq 3$, but $\dim W \neq 4$.
	\end{itemize}
	Assume furthermore that
	\begin{itemize}
		\item an orthogonal decomposition $W = W_1 \oplus \cdots \oplus W_s$ into non-degenerate subspaces is given, with $h_i := h|_{W_i \times W_i}$;
		\item $T = \prod_{i=1}^s T_i \subset \prod_{i=1}^s G(W_i, h_i) \subset G$ is a maximal torus of $G$.
	\end{itemize}

	Write $Y = \bigoplus_{i=1}^s Y_i$, where $Y = \mathrm{X}_*(T)$ and $Y_i = \mathrm{X}_*(T_i)$ for all $1 \leq i \leq s$. Let $Q: Y \to \Z$ be a $\mathrm{W}(G, T)$-invariant quadratic form and define $B_Q$ by \eqref{eqn:BQ}. Then $Y_i$ is orthogonal to $Y_j$ relative to $B_Q$ for all $1 \leq i \neq j \leq s$.
\end{lemma}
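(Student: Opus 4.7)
The plan is to exploit Weyl-invariance of $Q$, which by \eqref{eqn:BQ} propagates to the associated symmetric bilinear form $B_Q$, in order to force orthogonality of the sublattices $Y_i$. Concretely I fix, for each $i$, a standard $\Z$-basis $\{e^{(i)}_k : 1 \leq k \leq r_i\}$ of $Y_i$, where $r_i := \mathrm{rank}(T_i)$. By $\Z$-bilinearity it suffices to show $B_Q(e_a, e_b) = 0$ whenever $e_a \in Y_i$ and $e_b \in Y_j$ belong to different summands. For each such pair, I plan to exhibit a Weyl element $w \in \mathrm{W}(G, T)$ satisfying $w \cdot e_a = -e_a$ and $w \cdot e_b = e_b$. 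Then $\mathrm{W}(G, T)$-invariance of $B_Q$ yields $B_Q(e_a, e_b) = -B_Q(e_a, e_b)$, and integrality forces the value to vanish.

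In the two simpler cases, where $(W, h)$ is symplectic or quadratic of odd dimension, the group $G$ is split over $\overline{F}$ of type $C_n$ or $B_n$ respectively, and $\mathrm{W}(G, T) \cong (\Z/2)^n \rtimes S_n$ acts on the standard basis of $Y$ by arbitrary signed permutations. The required element $w$ is simply the sign change on the single vector $e_a$, so no further work is needed.

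The main obstacle is $G = \SO(W)$ with $\dim W = 2n$ even, where $G$ is of type $D_n$ and $\mathrm{W}(G,T) \cong (\Z/2)^{n-1} \rtimes S_n$ consists only of signed permutations of even parity, so negating $e_a$ alone is inadmissible. The remedy is to take $w$ to be the double sign change on $e_a$ and on a third basis vector $e_c$, chosen so that $c \notin \{a, b\}$; this double sign change lies in $\mathrm{W}(G, T)$, inverts $e_a$, and fixes $e_b$ because $c \neq b$. A third index $c$ is available precisely when $n \geq 3$, i.e.\ $\dim W \geq 6$, which is exactly the lower bound enforced by the hypothesis after the case $\dim W = 4$ has been explicitly excluded; for $n = 2$ only two basis indices are available, the parity constraint cannot be discharged without disturbing $e_b$, and in fact the conclusion of the lemma genuinely fails in that case.
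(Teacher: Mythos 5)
Your argument is correct, and it takes a genuinely different, more elementary route than the paper's. The paper reduces to $F = \overline{\Q}$ via a model over $\Z[\tfrac{1}{2}]$, invokes the fact that on an irreducible root system all invariant quadratic forms are proportional (Bourbaki), and then exhibits a single invariant form --- the trace form $B_{\mathrm{tr}}(y', y'') = \Tr(y'y''|W)$ --- for which the blocks $\mathfrak{g}(W_i, h_i)$ are visibly orthogonal. You instead work directly with $B_Q$ over $\Z$: after choosing the standard basis of $Y$ adapted to $T = \prod_i T_i$ (each $Y_i$ being spanned by the basis vectors attached to the hyperbolic planes inside $W_i$ --- a point worth making explicit, since it is what lets the signed-permutation description of $\mathrm{W}(G,T)$ interact with the decomposition $Y = \bigoplus_i Y_i$), you produce for each off-diagonal pair $(e_a, e_b)$ a Weyl element negating $e_a$ and fixing $e_b$, whence $B_Q(e_a, e_b) = -B_Q(e_a, e_b) = 0$. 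The case split is handled correctly: full signed permutations in types $B_n$ and $C_n$; in type $D_n$ the even-parity constraint is circumvented by a double sign change involving a third index, available exactly when $n \geq 3$, which transparently explains the exclusion of $\dim W = 4$ and matches the paper's $\SO(4)$ counterexamples. What the paper's argument buys is uniformity (no case analysis on the Weyl group) and the reusable proportionality principle; what yours buys is that it stays entirely over $\Z$, avoids the passage to characteristic $0$ and the Killing form, and pinpoints exactly where low rank causes the failure.
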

\begin{proof}
	Note that we are free to replace each $T_i$ by its conjugates in $G(W_i, h_i)$.
	
	The problem is insensitive to $F$ in the following sense. Up to isomorphisms, each $(W_i, h_i)$ can be defined over $\Z[\frac{1}{2}]$, so that $T_i \subset G(W_i, h_i)$ and $T \subset G$ have split smooth models over $\Z[\frac{1}{2}]$, for suitable choices of $T_i$. Going through $F \leftarrow \Z[\frac{1}{2}] \to \overline{\Q}$, we infer that it suffices to deal with the special case $F = \overline{\Q}$.
	
	Assume henceforth $F = \overline{\Q}$. By the assumptions, the Coxeter data underlying $\mathrm{W}(G, T)$ is irreducible, thus up to passage from $\Z$-valued to $F$-valued quadratic forms, the invariant quadratic forms on $Y$ are proportional to each other by \cite[Ch.~V, \S 4.7]{Bou02}. It suffices to exhibit a nonzero invariant quadratic form $Q': Y \otimes F \to F$, such that $Y_i$ is orthogonal to $Y_j$ relative to $Q'$ for each $1 \leq i \neq j \leq s$.
	
	Denote by $B_{\mathrm{K}}$ the Killing form for $\mathfrak{g}$, and define the trace form
	\[ B_{\mathrm{tr}}(y', y'') := \Tr(y' y'' | W), \quad y', y'' \in \mathfrak{g} \subset \End(W). \]
	
	Clearly, $B_{\mathrm{tr}}$ is $G$-invariant, symmetric and nonzero. From the simplicity of $\mathfrak{g}$ we deduce that
	\begin{equation}\label{eqn:Killing}
		\exists c \in F^{\times}, \quad B_{\mathrm{tr}} = c B_{\mathrm{K}}.
	\end{equation}
	
	Embed $Y$ into $Y \otimes_{\Z} F \simeq \mathfrak{t}$. Similarly, $Y_i \hookrightarrow \mathfrak{t}_i$ for all $i$. The restriction of $B_{\mathrm{K}}$ to $\mathfrak{t} \times \mathfrak{t}$ is known to be $\mathrm{W}(G, T)$-invariant and non-degenerate, hence so is the restriction of $B_{\mathrm{tr}}$ by \eqref{eqn:Killing}. Take $Q': Y \otimes F \to F$ to be the invariant quadratic form corresponding to $B_{\mathrm{tr}}|_{\mathfrak{t} \times \mathfrak{t}}$.
	
	When $i \neq j$, we have $\mathfrak{g}(W_i, h_i) \mathfrak{g}(W_j, h_j) = \{0\}$ inside the ring $\End(W)$, because $\mathfrak{g}(W_i, h_i)$ is zero on $W_j$. Hence the subspaces $\mathfrak{g}(W_i, h_i)$ and $\mathfrak{g}(W_j, h_j)$ of $\mathfrak{g}$ are orthogonal relative to $B_{\mathrm{tr}}$. It follows that $Y_i$ and $Y_j$ are orthogonal relative to $Q'$, as desired.
\end{proof}

\begin{theorem}\label{prop:main}
	Let $G = G(W, h)$ be a classical group over $F$ in one of the following cases:
	\begin{itemize}
		\item $(W, h)$ is symplectic,
		\item $(W, h)$ is quadratic and $\dim W$ is odd,
		\item $(W, h)$ is quadratic and $\dim W \geq 6$ is even.
	\end{itemize}
	Let $\tilde{\mathcal{G}}$ be an object of $\cate{CExt}(G, \mathbf{K}_2)$. Then for all Levi subgroup $M$ of $G$, the pull-back $\tilde{\mathcal{M}}$ of $\tilde{\mathcal{G}}$ to $M$ is the external Baer sum of its pull-backs to each factor in \eqref{eqn:Levi}.
\end{theorem}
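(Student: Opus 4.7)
The plan is to transport the assertion to the Brylinski--Deligne classification via Theorem \ref{prop:BD}, whereupon it reduces to a combinatorial orthogonality statement already settled by Lemma \ref{prop:main-aux}. Since none of the three cases in the theorem involves Hermitian data, we have $E = F$ and the Levi decomposition \eqref{eqn:Levi} reads $M = \prod_{i=1}^r \GL_{d_i} \times G^\flat$.

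First, I would fix a maximal $F$-torus $T$ of $M$. Because any Levi subgroup of $G$ has the same absolute rank as $G$, this $T$ is simultaneously a maximal $F$-torus of $G$, and the factorization of $M$ induces $T = \prod_i T_i \times T^\flat$, hence a decomposition $Y = \bigoplus_i Y_i \oplus Y^\flat$ of the cocharacter lattice $Y = \mathrm{X}_*(T_{\overline{F}})$. By Theorem \ref{prop:BD}, $\tilde{\mathcal{G}}$ is classified by a triple $(Q, \mathcal{D}, \varphi) \in \cate{BD}(G, T)$, and by its functoriality the pull-back $\tilde{\mathcal{M}}$ corresponds to the pulled-back triple in $\cate{BD}(M, T)$. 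Invoking Corollary \ref{prop:BD-external} together with Proposition \ref{prop:Baer-orthogonal}, the desired external Baer sum decomposition amounts to showing that $Y_1, \ldots, Y_r$ and $Y^\flat$ are mutually orthogonal with respect to the bilinear form $B_Q$ of \eqref{eqn:BQ}.

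To produce this orthogonality I would realize each $\GL_{d_i}$ as the Levi of a maximal parabolic of $G(\ell_i \oplus \ell_i^*, h|_{\ell_i \oplus \ell_i^*})$, so that $T_i$ is itself a maximal $F$-torus of this smaller classical group and $Y_i$ thereby coincides with the cocharacter lattice of a maximal torus of $G(\ell_i \oplus \ell_i^*)$. The orthogonal decomposition $W = \bigoplus_{i=1}^r (\ell_i \oplus \ell_i^*) \oplus W^\flat$ into non-degenerate subspaces is then precisely the setup required by Lemma \ref{prop:main-aux}; base-changing to $\overline{F}$ preserves the decomposition and the dimension hypotheses on $W$---in particular, the exclusion $\dim W = 4$ in the even quadratic case matches exactly the restriction imposed by that lemma---so applying it to the $\mathrm{W}(G_{\overline{F}}, T_{\overline{F}})$-invariant form $Q$ yields the required orthogonality. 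No substantive obstacle remains once these dictionaries are lined up, since the genuine geometric input---the construction of an invariant quadratic form on $Y$ via the trace form on $\mathfrak{g}$, together with the vanishing of products of Lie subalgebras corresponding to disjoint summands of $W$ inside $\End(W)$---has already been carried out once and for all in the proof of Lemma \ref{prop:main-aux}.
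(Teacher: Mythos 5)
Your proposal is correct and follows essentially the same route as the paper: reduce via Theorem \ref{prop:BD}, Corollary \ref{prop:BD-external} and Proposition \ref{prop:Baer-orthogonal} to the mutual orthogonality of $Y_1, \ldots, Y_r, Y^\flat$ under $B_Q$, then apply Lemma \ref{prop:main-aux} to the orthogonal decomposition $W = \bigoplus_i (\ell_i \oplus \ell_i^*) \oplus W^\flat$ after base change to $\overline{F}$ (the paper writes $W_i = \mathbb{H}^{\oplus d_i}$, which is the same thing). The only omission is cosmetic: the paper first disposes of the degenerate quadratic cases $\dim W \leq 2$ where $G$ is a torus, but nothing substantive is missing.
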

\begin{proof}
	First off, we may exclude the quadratic $F$-vector spaces with $\dim \leq 2$, in which case $G$ is a torus.
	
	Fix a maximal $F$-torus $T$ of $M$. Let $Y = \bigoplus_{i=1}^r Y_i \oplus Y^\flat$ be the decomposition of $Y := \mathrm{X}_*(T_{\overline{F}})$ relative to \eqref{eqn:Levi}.
	
	It suffices to decompose the object in $\cate{BD}(M, T)$ attached to $\tilde{\mathcal{M}}$, by Corollary \ref{prop:BD-external}. Note that by second part of Theorem \ref{prop:BD}, that object is pulled back from the $(Q, \mathcal{D}, \varphi) \in \cate{BD}(G, T)$ attached to $\tilde{\mathcal{G}}$.

	Proposition \ref{prop:Baer-orthogonal} reduces us to showing that $Y_1, \ldots, Y_r, Y^\flat$ are mutually orthogonal relative to $B_Q$. Denote by $\mathbb{H}$ the hyperbolic plane (resp.\ any two-dimensional symplectic vector space) in the quadratic (resp.\ symplectic) case. Then the desired orthogonality follows from Lemma \ref{prop:main-aux}, namely by taking $s = r+1$, $W_s = W^\flat$ and $W_i = \mathbb{H}^{\oplus d_i}$ for each $1 \leq i \leq r$, and then base-change to $\overline{F}$; note that $M \subset \prod_{i=1}^s G(W_i, h_i)$.
\end{proof}

\begin{remark}
	The arguments above yield the following variant: let $W = \bigoplus_{i=1}^s W_i$ and $G = G(W, h)$ be as in Lemma \ref{prop:main-aux}, but without assuming $F = \overline{F}$, then the pull-back of every $\tilde{\mathcal{G}} \in \cate{CExt}(G, \mathbf{K}_2)$ to $\prod_{i=1}^s G(W_i, h_i) \subset G$ breaks into an external Baer sum. 
\end{remark}

We now assume $F$ is local, take $m \in \Z_{\geq 1}$ and consider a classical group $G = G(W, h)$ over $F$ in general. Specialize the formalism of \S\ref{sec:cext} to central extensions
\[ 1 \to \bmu_m \to \tilde{G} \to G(F) \to 1 \]
in $\cate{CExt}(G(F), \bmu_m)$. Objects of $\cate{CExt}(G(F), \bmu_m)$ are called $m$-fold covers of $G(F)$.

Given a Levi subgroup $M \subset G$ as in \eqref{eqn:Levi}, we have
\begin{equation}\label{eqn:Levi-F}
	M(F) = \prod_{i=1}^r \GL_{d_i}(E) \times G^\flat(F);
\end{equation}
we denote the preimages of $M(F)$, $G^\flat(F)$ and $\GL_{d_i}(E)$ in $\tilde{G}$ by $\tilde{M}$, $\tilde{G}^\flat$ and $\widetilde{\GL_{d_i}(E)}$, respectively. We also say that $\tilde{M}$ is a Levi subgroup of $\tilde{G}$. Below is a paraphrase of \cite[Definition 5.1]{HW25}.

\begin{definition}\label{def:decomposed-Levi}
	Let $\tilde{M}$ be a Levi subgroup of $\tilde{G}$, described as above. We say $\tilde{M}$ is decomposed if
	\[ \tilde{M} \simeq \widetilde{\GL_{d_1}(E)} \dot{\boxplus} \cdots \dot{\boxplus} \widetilde{\GL_{d_r}(E)} \dot{\boxplus} \tilde{G}^\flat \]
	in $\cate{CExt}(M(F), \bmu_m)$.
\end{definition}

\begin{proposition}\label{prop:decompose-comm}
	The Levi subgroup $\tilde{M}$ of $\tilde{G}$ is decomposed if and only if the commutator map $\mathrm{comm}$ is trivial on each pair of factors of \eqref{eqn:Levi-F}.
\end{proposition}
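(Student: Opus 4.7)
The plan is to reduce this directly to the general recognition criterion for external Baer sums, namely Proposition \ref{prop:recognition-Baer}. The decomposition \eqref{eqn:Levi-F} expresses $M(F)$ as a direct product of locally compact groups $H_1, \ldots, H_{r+1}$, where $H_i := \GL_{d_i}(E)$ for $1 \leq i \leq r$ and $H_{r+1} := G^\flat(F)$, so the Levi subgroup $\tilde{M}$ sits in $\cate{CExt}(\prod_{i=1}^{r+1} H_i, \bmu_m)$ and is a legitimate input to the recognition criterion with $A = \bmu_m$.

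The key identification to make explicit is that the pull-back of $\tilde{M}$ along the inclusion $H_i \hookrightarrow M(F)$ coincides, as an object of $\cate{CExt}(H_i, \bmu_m)$, with the preimage of $H_i$ in $\tilde{M}$; this is immediate from the construction of pull-back central extensions. Consequently these pull-backs are precisely $\widetilde{\GL_{d_i}(E)}$ for $1 \leq i \leq r$ and $\tilde{G}^\flat$ for $i = r+1$, which are the factors appearing in Definition \ref{def:decomposed-Levi}.

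With this identification, Proposition \ref{prop:recognition-Baer} asserts that
\[ \tilde{M} \simeq \widetilde{\GL_{d_1}(E)} \dot{\boxplus} \cdots \dot{\boxplus} \widetilde{\GL_{d_r}(E)} \dot{\boxplus} \tilde{G}^\flat \]
in $\cate{CExt}(M(F), \bmu_m)$ if and only if $\mathrm{comm}|_{H_i \times H_j} = \mathbf{1}$ for all $1 \leq i \neq j \leq r+1$. The left-hand side is exactly the condition that $\tilde{M}$ be decomposed, and the right-hand side is exactly the triviality of $\mathrm{comm}$ on each pair of factors of \eqref{eqn:Levi-F}. This proves the equivalence.

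There is no substantive obstacle: the statement is essentially a repackaging of Proposition \ref{prop:recognition-Baer} in the notation of Definition \ref{def:decomposed-Levi}, and the only thing to verify is the matching of the pull-back factors with the preimage subgroups $\widetilde{\GL_{d_i}(E)}$ and $\tilde{G}^\flat$, which is tautological.
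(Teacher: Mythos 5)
Your proposal is correct and matches the paper's proof, which consists of the single line ``Apply Proposition \ref{prop:recognition-Baer}''; you have merely spelled out the tautological identification of the pull-backs with the preimage subgroups, which the paper leaves implicit.
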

\begin{proof}
	Apply Proposition \ref{prop:recognition-Baer}.
\end{proof}

\begin{corollary}\label{prop:main-topological}
	Let $F$ be a local field, and let $m \mid m_F$ be as in \S\ref{sec:BD}. Suppose that $1 \to \bmu_m \to \tilde{G} \to G(F) \to 1$ is an $m$-fold BD-cover, where $G = G(W, h)$ is subject to the same constraints in Theorem \ref{prop:main}. Let $M$ be any Levi subgroup of $G$, then $\tilde{M}$ is decomposed.
\end{corollary}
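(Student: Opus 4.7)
The plan is to deduce this corollary formally from the $\mathbf{K}_2$-level decomposition supplied by Theorem \ref{prop:main}, together with the functoriality of Proposition \ref{prop:F-pts-Baer}. By the definition of a BD-cover, I fix some object $\tilde{\mathcal{G}} \in \cate{CExt}(G, \mathbf{K}_2)$ whose associated topological $m$-fold cover (obtained by taking $F$-points and pushing out along the Hilbert symbol) is the given $\tilde{G}$. Let $\tilde{\mathcal{M}}$ be the pull-back of $\tilde{\mathcal{G}}$ along the closed immersion $M \hookrightarrow G$. Since $(W,h)$ satisfies the hypotheses of Theorem \ref{prop:main}, that theorem yields an isomorphism
\[
\tilde{\mathcal{M}} \simeq \tilde{\mathcal{G}}|_{\Res_{E|F}\GL_{d_1}} \dot{\boxplus} \cdots \dot{\boxplus} \tilde{\mathcal{G}}|_{\Res_{E|F}\GL_{d_r}} \dot{\boxplus} \tilde{\mathcal{G}}|_{G^{\flat}}
\]
in $\cate{CExt}(M, \mathbf{K}_2)$, where each summand denotes the pull-back of $\tilde{\mathcal{G}}$ along the inclusion of the corresponding factor of \eqref{eqn:Levi} into $M$.

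Next, I apply the functor $\cate{CExt}(M, \mathbf{K}_2) \to \cate{CExt}(M(F), \bmu_m)$ of Proposition \ref{prop:F-pts-Baer}. By that proposition, this functor preserves both pull-backs and external Baer sums. On the one hand, it sends $\tilde{\mathcal{M}}$ to the pull-back of $\tilde{G}$ along $M(F) \hookrightarrow G(F)$, which is exactly $\tilde{M}$. On the other hand, the same preservation of pull-backs identifies the image of $\tilde{\mathcal{G}}|_{\Res_{E|F}\GL_{d_i}}$ with $\widetilde{\GL_{d_i}(E)}$, and the image of $\tilde{\mathcal{G}}|_{G^{\flat}}$ with $\tilde{G}^{\flat}$, each viewed inside $\tilde{M}$. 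Combining these observations with the preservation of $\dot{\boxplus}$ yields
\[
\tilde{M} \simeq \widetilde{\GL_{d_1}(E)} \dot{\boxplus} \cdots \dot{\boxplus} \widetilde{\GL_{d_r}(E)} \dot{\boxplus} \tilde{G}^{\flat}
\]
in $\cate{CExt}(M(F), \bmu_m)$, which is precisely Definition \ref{def:decomposed-Levi}.

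I do not anticipate any serious obstacle: the entire argument is a formal transport along the braided monoidal functor of Proposition \ref{prop:F-pts-Baer}, once the orthogonality at the level of $(Q, \mathcal{D}, \varphi)$ has been established in Theorem \ref{prop:main}. The only point requiring mild care is the bookkeeping that the pull-back in $\cate{CExt}(-,\mathbf{K}_2)$ of $\tilde{\mathcal{G}}$ to $\Res_{E|F}\GL_{d_i}$ corresponds, after taking $F$-points and pushing out along $(\cdot,\cdot)_{F,m}$, to the cover $\widetilde{\GL_{d_i}(E)}$ as defined in \S\ref{sec:intro}; but this identification is immediate from the compatibility of the Brylinski--Deligne construction with pull-backs, which is the first assertion of Proposition \ref{prop:F-pts-Baer}.
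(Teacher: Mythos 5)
Your proposal is correct and follows exactly the paper's own argument: fix a $\mathbf{K}_2$-extension $\tilde{\mathcal{G}}$ giving rise to $\tilde{G}$, decompose its pull-back $\tilde{\mathcal{M}}$ as an external Baer sum via Theorem \ref{prop:main}, and transport this through the functor of Proposition \ref{prop:F-pts-Baer}, which preserves pull-backs and external Baer sums. The extra bookkeeping you include about identifying the images of the factors is a sound (and slightly more explicit) rendering of the same two-line proof in the paper.
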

\begin{proof}
	Let $\tilde{\mathcal{G}}$ be the object of $\cate{CExt}(G, \mathbf{K}_2)$ giving rise to $\tilde{G}$. Let $\tilde{\mathcal{M}}$ be the pull-back of $\tilde{\mathcal{G}}$ to $M$, which gives rise to the BD-cover $\tilde{M}$ (Proposition \ref{prop:F-pts-Baer}).
	
	To decompose $\tilde{M}$ into the desired external Baer sum, it suffices to decompose $\tilde{\mathcal{M}}$ by the second part of Proposition \ref{prop:F-pts-Baer}. Now apply Theorem \ref{prop:main}.
\end{proof}

We remark that when $F$ is non-Archimedean and $G$ is symplectic, the BD-covers exhaust all finite topological central extensions of $G(F)$; see the discussions in \cite[\S3]{GGW}.

\section{Counterexamples for unitary groups}\label{sec:counterexample-unitary}
We will give examples of non-decomposed Levi subgroups in BD-covers of unitary groups or $\SO(4)$. Retain the notations from \S\ref{sec:classical-group}.

Consider unitary groups first. Fix $n \in \Z_{\geq 1}$ and a quadratic extension $E|F$ of fields with $\mathrm{char}(F) \neq 2$. Denote the non-identity element of $\Gal(E|F)$ as $x \mapsto \overline{x}$ (where $x \in E$).

Given $c \in \Z$, take
\[ 1 \to \mathbf{K}_2 \to \widetilde{\mathcal{GL}}_{2n}^{(c)} \to \GL_{2n} \to 1 \]
to be the central $\mathbf{K}_2$-extension that gives rise to Kazhdan--Patterson covers \cite{KP84} with twisting parameter $c$; see \cite[\S 2.7.7]{Weis18}. Letting $T_{2n}$ be the standard maximal torus of $\GL_{2n}$ with cocharacter lattice $Y_{2n} = \Z^{2n}$, then the corresponding datum $\left( Q_{2n}^{(c)}, \mathcal{D}_{2n}^{(c)}, \varphi_{2n}^{(c)} \right) \in \cate{BD}(\GL_{2n}, T_{2n})$ satisfies
\begin{equation}\label{eqn:Q2n}
	Q_{2n}^{(c)} = (1+c) \left( \sum_{i=1}^{2n} \mathsf{X}_i \right)^2 - \sum_{1 \leq i < j \leq 2n} \mathsf{X}_i \mathsf{X}_j;
\end{equation}
see \textit{loc.\ cit.}

We view the above as a central $\mathbf{K}_2$-extension over $E$, then apply the Weil restriction along $E|F$ for central $\mathbf{K}_2$-extensions explained in \cite[\S 2.3]{Li20}. This yields a central $\mathbf{K}_2$-extension over $F$, denoted as
\begin{equation}\label{eqn:KP-K2}
	0 \to \mathbf{K}_2 \to \Res_{E|F} \widetilde{\mathcal{GL}}_{2n}^{(c)} \to \Res_{E|F} \GL_{2n} \to 1.
\end{equation}

Now take $(W, h)$ to be the sum of $n$ copies of the hyperbolic Hermitian form, thus an Hermitian form of dimension $2n$ for $E|F$. Denote by $G := G(W, h)$ the corresponding unitary group, embedded into into $\Res_{E|F} \GL_{2n}$ via the standard (hyperbolic) basis. Therefore $T := (\Res_{E|F} T_{2n}) \cap G$ is a maximal $F$-torus of $G$, consisting of matrices
\begin{equation}\label{eqn:T-unitary}
	\begin{pmatrix}
		x_1 & & & & & \\
		& \ddots & & & & \\
		& & x_n & & & \\
		& & & \overline{x_n}^{-1} & & \\
		& & & & \ddots & \\
		& & & & & \overline{x_1}^{-1}
	\end{pmatrix}, \quad
	x_1, \ldots, x_n \in E^{\times}.
\end{equation}

Pulling \eqref{eqn:KP-K2} back to $G := G(W, h)$, we obtain
\begin{equation}\label{eqn:unitary-K2}
	0 \to \mathbf{K}_2 \to \tilde{\mathcal{G}} \to G \to 1,
\end{equation}
an object of $\cate{CExt}(G, \mathbf{K}_2)$. Note that $T \simeq (\Res_{E|F} \Gm)^n$ is also a minimal Levi subgroup of $G$.

The result below shows that Theorem \ref{prop:main} does not hold for unitary groups and their Levi subgroups in general.

\begin{proposition}\label{prop:unitary-example}
	Let $\tilde{\mathcal{T}}$ be the pull-back of \eqref{eqn:unitary-K2} to $T$. Then $\tilde{\mathcal{T}}$ is not the external Baer sum of its pull-backs to the $n$ factors $\Res_{E|F} \Gm$.
\end{proposition}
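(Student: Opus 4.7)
The plan is to reduce the statement, via Theorem \ref{prop:BD}, Corollary \ref{prop:BD-external} and Proposition \ref{prop:Baer-orthogonal}, to a failure of $B_Q$-orthogonality on the cocharacter lattice, and then exhibit such a failure by a direct computation. Let $(Q, \mathcal{D}, \varphi)$ be the Brylinski--Deligne datum attached to $\tilde{\mathcal{T}}$. Applying Corollary \ref{prop:BD-external} together with Proposition \ref{prop:Baer-orthogonal} to $T = \prod_{i=1}^n \Res_{E|F} \Gm$, the proposition amounts to producing $i \neq j$ and $y \in Y_i$, $y' \in Y_j$ with $B_Q(y, y') \neq 0$, where $Y_i := \mathrm{X}_*((\Res_{E|F}\Gm)_{\overline{F}})$ is the cocharacter lattice of the $i$-th factor of $T$.

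Next I would compute $Q$ after base change to $\overline{F}$. Since $E \otimes_F \overline{F} \simeq \overline{F} \times \overline{F}$, the group $(\Res_{E|F}\GL_{2n})_{\overline{F}}$ splits as $\GL_{2n} \times \GL_{2n}$, and the base change of \eqref{eqn:KP-K2} becomes the external Baer sum $\widetilde{\mathcal{GL}}_{2n}^{(c)} \dot{\boxplus} \widetilde{\mathcal{GL}}_{2n}^{(c)}$. Hence on $\mathrm{X}_*((\Res_{E|F}T_{2n})_{\overline{F}}) = \Z^{2n} \oplus \Z^{2n}$, with standard bases $(f_k)_k$ and $(g_k)_k$, the quadratic form is $Q_{2n}^{(c)}(f) + Q_{2n}^{(c)}(g)$. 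A polarization of \eqref{eqn:Q2n} then gives
\[ B_Q(f_k, f_\ell) = B_Q(g_k, g_\ell) = 2c+1 \quad (k \neq \ell), \qquad B_Q(f_k, g_\ell) = 0. \]

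Finally, I would read off the embedding $T_{\overline{F}} \hookrightarrow (\Res_{E|F}T_{2n})_{\overline{F}}$ from \eqref{eqn:T-unitary} under the identifications $x_i \leftrightarrow (x_i^{(1)}, x_i^{(2)})$ and $\overline{x_i}^{-1} \leftrightarrow ((x_i^{(2)})^{-1}, (x_i^{(1)})^{-1})$, which express the two standard cocharacters of the $i$-th factor $\Res_{E|F}\Gm$ as
\[ \lambda_1^{(i)} = f_i - g_{2n+1-i}, \qquad \lambda_2^{(i)} = g_i - f_{2n+1-i}. \]
For $i \neq j$ in $\{1, \ldots, n\}$ the four indices $i, j, 2n+1-i, 2n+1-j$ are pairwise distinct, so
\[ B_Q(\lambda_1^{(i)}, \lambda_1^{(j)}) = B_Q(f_i, f_j) + B_Q(g_{2n+1-i}, g_{2n+1-j}) = 2(2c+1) \neq 0 \]
for every $c \in \Z$, providing the required non-orthogonality. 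The only delicate step is the last base-change bookkeeping, namely arranging the Galois twist so that the two diagonal entries $x_i$ and $\overline{x_i}^{-1}$ of \eqref{eqn:T-unitary} are sent to the correct $\GL_{2n}$-factor over $\overline{F}$; once this is set up, everything else is a one-line calculation.
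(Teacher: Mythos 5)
Your proof is correct and takes essentially the same route as the paper: both reduce, via the equivalence of Theorem \ref{prop:BD} and Proposition \ref{prop:Baer-orthogonal}, to exhibiting a nonzero cross term of $B_Q$ between two factors of $T$, identify the quadratic form of $\Res_{E|F}\widetilde{\mathcal{GL}}_{2n}^{(c)}$ as the orthogonal sum of two copies of \eqref{eqn:Q2n}, and then restrict along the embedding \eqref{eqn:T-unitary}. Your value $B_Q(\lambda_1^{(i)},\lambda_1^{(j)})=2(2c+1)$ coincides with the paper's crossed coefficient $4(1+c)-2=4c+2$, and your "delicate" bookkeeping is indeed the crux and is done correctly: with the wrong assignment $\lambda_1^{(i)}=f_i-f_{2n+1-i}$ the pairing would cancel to $0$, whereas the correct Galois swap $\overline{x_i}\leftrightarrow(x_i^{(2)},x_i^{(1)})$ yields $f_i-g_{2n+1-i}$ and the nonzero answer.
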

\begin{proof}
	Take the maximal $F$-torus $\Res_{E|F}(T_{2n})$ of $\Res_{E|F} \GL_{2n}$, whose cocharacter lattice is two copies of $Y_{2n}$ exchanged by $\Gal(E|F)$. The datum $(Q', \mathcal{D}', \varphi')$ attached to $\Res_{E|F} \widetilde{\mathcal{GL}}_{2n}^{(c)}$ is determined in \cite[Theorem 2.3.10]{Li20}. What matters here is the part $Q'$: by \textit{loc.\ cit.}, it is the sum of \eqref{eqn:Q2n} and its avatar in the ``conjugate variables'' $\mathsf{Y}_1, \ldots, \mathsf{Y}_{2n}$, namely
	\[ Q' = (1 + c) \left( \left( \sum_{i=1}^{2n} \mathsf{X}_i \right)^2 + \left( \sum_{i=1}^{2n} \mathsf{Y}_i \right)^2 \right) - \sum_{1 \leq i < j \leq 2n} (\mathsf{X}_i \mathsf{X}_j + \mathsf{Y}_i \mathsf{Y}_j). \]
	
	We pull $\Res_{E|F} \widetilde{\mathcal{GL}}_{2n}^{(c)}$ back to $G$ to get $(Q, \mathcal{D}, \varphi) \in \cate{BD}(G, T)$, using the second part of Theorem \ref{prop:BD}. In view of \eqref{eqn:T-unitary}, this amounts to substituting $\mathsf{X}_{n+1}, \ldots, \mathsf{X}_{2n}$ by $-\mathsf{Y}_n, \ldots, - \mathsf{Y}_1$, and $\mathsf{Y}_{n+1}, \ldots \mathsf{Y}_{2n}$ by $-\mathsf{X}_n, \ldots, -\mathsf{X}_1$ accordingly in $Q'$. Therefore
	\begin{multline*}
		Q = 2(1 + c) \left( \mathsf{X}_1 + \cdots + \mathsf{X}_n - \mathsf{Y}_1 - \cdots - \mathsf{Y}_n \right)^2 \\
		- 2\sum_{1 \leq i < j \leq n} (\mathsf{X}_i \mathsf{X}_j + \mathsf{Y}_i \mathsf{Y}_j) + 2\sum_{1 \leq i, j \leq n} \mathsf{X}_i \mathsf{Y}_j.
	\end{multline*}
	
	Observe that $\Gal(E|F)$ acts on $Y := \mathrm{X}_*(T_E)$ by swapping $\mathsf{X}_i$ and $\mathsf{Y}_i$ for all $i$. In fact $Y$ is the induced Galois module $\Ind_{E|F} (\Z^n) := \Ind_{\Gal_E}^{\Gal_F} (\Z^n)$.
	
	For every $i$, the pair of variables $\mathsf{X}_i, \mathsf{Y}_i$ lives over the $i$-th factor $\Res_{E|F} \Gm$ of $T$. Hence $Q$ is the sum of ``diagonal'' terms plus the following ``crossed'' terms:
	\begin{equation}\label{eqn:KP-Q-crossed}
		\begin{gathered}
			(4(1+c) - 2) (\mathsf{X}_i \mathsf{X}_j + \mathsf{Y}_i \mathsf{Y}_j), \quad
			-(4(1+c) - 2) (\mathsf{X}_i \mathsf{Y}_j  + \mathsf{X}_j \mathsf{Y}_i) \\
			\text{for all}\; 1 \leq i < j \leq n.
		\end{gathered}
	\end{equation}
	
	Since $4(1+c) - 2 \neq 0$, it follows that $\tilde{\mathcal{T}}$ cannot be the external Baer sum of pull-backs to these $n$ copies of $\Res_{E|F} \Gm$.
\end{proof}

Now assume $F$ is local. For every $m \in \Z_{\geq 1}$ with $m \mid m_F$ and embedding $\epsilon: \mu_m(F) \hookrightarrow \CC^{\times}$, from \eqref{eqn:unitary-K2} one obtains an $m$-fold BD-cover $\tilde{G}$ of the unitary group $G(F)$, as well as its pull-back $\tilde{T}$ to $T(F)$ which arises from $\tilde{\mathcal{T}}$. The following result shows that Corollary \ref{prop:main-topological} does not hold for BD-covers of unitary groups in general. 

\begin{proposition}\label{prop:unitary-example-F}
	In the setting above, assume furthermore $m = m_F$ and that
	\begin{itemize}
		\item $F$ is a finite extension of $\Q_p$ for some odd prime $p$;
		\item $4(1+c) - 2 = dd'$ where $d, d' \in \Z_{\geq 1}$ and $d$ (resp.\ $d'$) divides $m$ (resp.\ is coprime to $m$);
		\item $p$ divides $m/d$.
	\end{itemize}
	
	Then the Levi subgroup $\tilde{T}$ of $\tilde{G}$ is not decomposed.
\end{proposition}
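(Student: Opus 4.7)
The strategy is to verify non-decomposability via Proposition \ref{prop:decompose-comm} by exhibiting two factors of $T(F) = (E^\times)^n$ whose mutual commutator in $\tilde{T}$ is nontrivial.

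First I would derive an explicit commutator formula between the $i$-th and $j$-th factors of $T(F)$ for $i \neq j$. From \eqref{eqn:KP-Q-crossed}, the crossed block of $B_Q$ between $Y_i = \Z \mathsf{X}_i \oplus \Z \mathsf{Y}_i$ and $Y_j$ has entries $\pm dd' = \pm(4(1+c)-2)$. Lifting $x \in E^\times$ in the $\ell$-th factor to the $\overline{F}$-cocharacter element $\mathsf{X}_\ell \otimes x + \mathsf{Y}_\ell \otimes \bar x \in Y \otimes \overline{F}^\times$, and combining the BD commutator formula at the $\overline{F}$-level with the Weil-restriction origin of $\tilde{\mathcal{G}}$, I expect to derive
\[ \mathrm{comm}_{\tilde{\mathcal{T}}(F)}(a, b) = N_{E|F}\bigl(\{a, b/\bar{b}\}_E\bigr)^{dd'} \in K_2(F) \]
for $a$ in the $i$-th and $b$ in the $j$-th factor. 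The scalar $dd'$ comes directly from the crossed block of $B_Q$, while the combination $b/\bar b$ captures the Galois-averaging of $b$; the formula is verified by checking that both sides map to $\{a/\bar a, b/\bar b\}_E^{dd'} \in K_2(E)$.

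Pushing forward along the $F$-Hilbert symbol $K_2(F) \to \bmu_m$ and invoking the projection formula $(\cdot, \cdot)_{F, m} \circ N_{E|F} = (\cdot, \cdot)_{E, m}$ on Steinberg symbols, the commutator of $\tilde{T}$ becomes
\[ \mathrm{comm}_{\tilde{T}}(a, b) = (a, b/\bar b)_{E, m}^{dd'} \in \bmu_m. \]
Since $\gcd(d', m) = 1$, the $d'$-power map on $\bmu_m$ is an automorphism, so non-triviality reduces to $(a, b/\bar b)_{E, m/d} \neq 1$ for suitable $a, b \in E^\times$ (here one uses $d \mid m$).

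The main obstacle is exhibiting such $a, b$. The hypothesis $p \mid m/d$ combined with $\mu_m \subset F \subset E$ ensures $\mu_{m/d} \subset E$, so the $(m/d)$-th Hilbert symbol is a non-degenerate pairing on $E^\times/(E^\times)^{m/d}$. By non-degeneracy it suffices to find $b \in E^\times$ with $b/\bar b \notin (E^\times)^{m/d}$. The existence of such a $b$ can be verified using the principal-unit filtration: since $E^1 = \Ker N_{E|F}$ contains a non-trivial pro-$p$ subgroup (coming from the principal units $U_E^n$, where Lang's theorem applied to $\Ga$ makes $N_{E|F}: U_E^n \to U_F^n$ surjective for $n$ large and identifies the kernel with a standard pro-$p$ module), and since $(E^\times)^{m/d}$ cuts out a proper subgroup of index divisible by $p$ inside this pro-$p$ part (by the hypothesis $p \mid m/d$ and standard $p$-adic analysis), one finds $b$ so that $b/\bar b$ is a principal unit avoiding $(E^\times)^{m/d}$. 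Non-degeneracy then yields the matching $a$, completing the proof.
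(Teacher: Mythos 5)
Your overall strategy coincides with the paper's: reduce non-decomposability (via Proposition \ref{prop:decompose-comm}) to the non-vanishing of the commutator between two factors of $T(F)$, identify that commutator with $(a, b\overline{b}^{-1})_{E,m}^{dd'}$, strip off the unit $d'$ and the factor $d$ to reduce to the degree-$m/d$ Hilbert symbol over $E$, and then exhibit a norm-one element of $E^\times$ that is not an $(m/d)$-th power. Your route to the commutator formula differs in flavour: you compute a transfer of Steinberg symbols $N_{E|F}\{a, b\overline{b}^{-1}\}^{dd'}$ in $K_2(F)$ and invoke the norm compatibility of the Hilbert symbol, whereas the paper invokes Weissman's description of the commutator via Tate local duality on $\Hm^1(F, Y\otimes\mu_m)$ together with Shapiro's lemma. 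Your endpoint formula agrees with the paper's, but as written the derivation is only a sketch (``I expect to derive''), and verifying the identity by mapping both sides into $K_2(E)$ is not by itself conclusive, since that map need not be injective on the relevant quotient; if you want to avoid Tate duality you should justify the descent from $\overline{F}$ more carefully.

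The genuine gap is in the last step. You assert that $(E^\times)^{m/d}$ ``cuts out a proper subgroup of index divisible by $p$'' inside the pro-$p$ part of the norm-one group $E^1$, calling this standard $p$-adic analysis; but this is precisely the delicate point, and the naive index count does not settle it. The subtlety is that a norm-one principal unit $u$ could a priori be a $p$-th power $u = v^p$ with $v \in U^{(1)}_E$ \emph{not} norm-one (one only gets $N_{E|F}(v) \in \mu_p(F)$, which is nontrivial exactly because $p \mid m_F$), so one cannot conclude by comparing $E^1 \cap U^{(1)}_E$ with its own $p$-th powers. The paper resolves this by using the structure of $U^{(1)}_E$ as a finitely generated $\Z_p$-module with torsion $\mu_{p^a}(E)$ and free quotient $\mathcal{F} = \mathcal{F}^+ \oplus \mathcal{F}^-$ under $\Gal(E|F)$: it chooses $[u] \in \mathcal{F}^- \smallsetminus p\mathcal{F}^-$ (hence $[u] \notin p\mathcal{F}$), and then uses the vanishing of $\Hm^i(\Gal(E|F), \mu_{p^a}(E))$ (odd order) to correct a lift by a root of unity so that it is genuinely norm-one; only then does $u = v^p$ lead to the contradiction $[u] \in p\mathcal{F}$. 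Your sketch contains neither the eigenspace decomposition nor the cohomological correction, so the key existence claim is unproved. (Separately, ``Lang's theorem applied to $\Ga$'' is not the right tool for the surjectivity of the norm on higher unit groups; that is a statement about algebraic groups over finite fields. The surjectivity you need follows from the standard filtration argument or local class field theory, but this is a minor point compared with the gap above.)
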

\begin{proof}
	Denote by $X$ the dual of $Y$, viewed as a Galois module. There is a Galois-equivariant homomorphism $\alpha: Y \to X$ induced by $B_Q$; we get
	\[ \alpha \otimes \mu_m: Y \otimes \mu_m \to X \otimes \mu_m. \]
	Note that $X \otimes \mu_m$ is canonically isomorphic to $\Ind_{E|F}\left( \Z^n \otimes \mu_m \right)$; see eg.\ \cite[(4.6)]{MS24}. The same also holds for $Y \otimes \mu_m$. We can view $\alpha \otimes \mu_m$ as a symmetric $n \times n$ matrix $(f_{ij})_{i, j}$ over $\End_{\Z}(\Ind_{E|F}(\mu_m))$.
	
	Now apply \cite[\S 1.3]{Weis16b} to $T$: Tate's local duality gives a perfect pairing
	\begin{equation*}
		\mathrm{Tate}: \Hm^1(F, Y \otimes \mu_m) \times \Hm^1(F, X \otimes \mu_m) \to \mu_m(F) \rightiso \bmu_m.
	\end{equation*}
	
	The Kummer sequence gives $T(F)/T(F)^m \rightiso \Hm^1(F, Y \otimes \mu_m)$ since $T$ is induced, and by \cite[(1.8)]{Weis16b}, $\mathrm{comm}: T(F) \times T(F) \to \bmu_m$ descends to the anti-symmetric form on $\Hm^1(F, Y \otimes \mu_m)$ obtained from
	\begin{equation*}
		\Hm^1(F, Y \otimes \mu_m) \times \Hm^1(F, Y \otimes \mu_m) \xrightarrow{(\identity, \alpha \otimes \mu_m)}
		\Hm^1(F, Y \otimes \mu_m) \times \Hm^1(F, X \otimes \mu_m) \xrightarrow{\text{Tate}} \bmu_m.
	\end{equation*}
	
	For all $1 \leq i \leq n$, denote by $T_i$ the $i$-th factor $\Res_{E|F} \Gm$ in $T$. Fix $i < j$. Identify both the $i$-th and $j$-th piece of $Y$ and $X$ with $\Ind_{E|F} (\mu_m)$, then by \textit{loc.\ cit.}, $\mathrm{comm}|_{T_i(F) \times T_j(F)}$ comes from the composite
	\begin{multline*}
		\Hm^1(F, \Ind_{E|F}(\mu_m)) \times \Hm^1(F, \Ind_{E|F}(\mu_m)) \xrightarrow{(\identity, f_{ij})} \\
		\Hm^1(F, \Ind_{E|F}(\mu_m)) \times \Hm^1(F, \Ind_{E|F}(\mu_m)) \xrightarrow{\text{Tate}} \bmu_m.
	\end{multline*}
	
	Recall that$f_{ij}$ is induced by the terms in $Q$ (or: in $B_Q$) with indices $i < j$. Some observations are in order.
	\begin{itemize}
		\item Shapiro's lemma implies $\Hm^1(F, \Ind_{E|F}(\mu_m)) \simeq \Hm^1(E, \mu_m) \simeq E^{\times}/E^{\times m}$
		\item For $\Hm^1(F, \Ind_{E|F}(\mu_m))$, the pairing $\mathrm{Tate}$ is then identified with $(\cdot, \cdot)_{E, m}$; this is \cite[(4.15)]{MS24} in the global case, and the local case is similar and easier.
		\item In view of \eqref{eqn:KP-Q-crossed}, $f_{ij}$ is identified with the endomorphism of $E^{\times}/E^{\times m}$ induced by
		\[ w \mapsto (w\overline{w}^{-1})^{4(1+c) - 2}. \]
		Indeed, $w$ (resp.\ $\overline{w}$) on the right comes from $\mathsf{X}_i \mathsf{X}_j + \mathsf{Y}_j \mathsf{Y}_j$ (resp.\ $\mathsf{X}_i \mathsf{Y}_j + \mathsf{X}_j \mathsf{Y}_i$).
	\end{itemize}
	
	Therefore $\mathrm{comm}|_{T_i(F) \times T_j(F)}$ becomes
	\[ (z, w) \mapsto (z, w\overline{w}^{-1})^{dd'}_{E, m}, \quad z, w \in E^{\times}. \]
	The following statements are equivalent:
	\begin{enumerate}[(i)]
		\item $\mathrm{comm}|_{T_i(F) \times T_j(F)}$ is constant;
		\item $(z, w) \mapsto (z^d, w\overline{w}^{-1})_{E, m}$ is constant;
		\item $(z, w) \mapsto (z, w\overline{w}^{-1})_{E, m/d}$ is constant, see eg.\ \cite[Remark 2.4.1]{Li20} for the equivalence with (ii);
		\item $u \in E^{\times m/d}$ for all norm-one element $u \in E^{\times}$, by the non-degeneracy of Hilbert symbols.
	\end{enumerate}
	
	To show $\tilde{T}$ is not decomposed, by Proposition \ref{prop:recognition-Baer} it suffices to disprove (i). Since $p \mid \frac{m}{d}$, we will disprove (iv) by exhibiting a norm-one element $u \in U^{(1)} \smallsetminus E^{\times p}$, where $U^{(1)} := 1 + \mathfrak{p}_E$. Observe that if $v \in E^{\times}$ satisfies $v^p \in U^{(1)}$, then $v \in U^{(1)}$.

	Recall (see \cite[Chapter 2, (5.7)]{Ne99}) that: (a) $U^{(1)}$ is actually a finitely generated $\Z_p$-module; (b) its torsion submodule is $\mu_{p^a}(E)$ for some $a \geq 0$; (c) denote its torsion-free quotient module as $\mathcal{F}$, then $\mathcal{F} \simeq \Z_p^{[E : \Q_p]}$ as topological modules. The $\Gal(E|F)$-action on $U^{(1)}$ passes to $\mathcal{F}$, and decomposes it to eigen-submodules $\mathcal{F}^+ \oplus \mathcal{F}^-$ since $2 \in \Z_p^{\times}$.
	
	We must have $\mathcal{F}^- \neq 0$, otherwise $E^{\times}$ would have only finitely many norm-one elements, which is absurd. Since $\mathcal{F}^- \subset \mathcal{F}$, it is free of finite rank over $\Z_p$. Take $[u] \in \mathcal{F}^- \smallsetminus p \mathcal{F}^-$ (written additively). Hence $[u] \notin p \mathcal{F}$ by $\mathcal{F} = \mathcal{F}^+ \oplus \mathcal{F}^-$.
	
	For every preimage $u \in U^{(1)}$ of $[u]$, we have $\overline{u} = \zeta u^{-1}$ where $\zeta \in \mu_{p^a}(E)$; it follows that $\overline{\zeta} = \zeta$. Since $\mu_{p^a}(E)$ is a $\Gal(E|F)$-module of odd order, we have
	\[ \forall i \geq 1, \; \Hm^i(\Gal(E|F), \mu_{p^a}(E)) = 0. \]
	Hence $\zeta = \xi\overline{\xi}$ for some $\xi \in \mu_{p^a}(E)$. Replacing $u$ by $u\xi^{-1}$, we may assume $\overline{u} = u^{-1}$. If $u = v^p$ for some $v \in U^{(1)}$, then $[u] = p[v] \in p\mathcal{F}$. Contradiction.
\end{proof}

Note that given an odd prime $p$, there is always some finite extension $F \supset \Q_p$ such that $p \mid m_F$. Since $2 \mid m_F$, one can always find $c$ that meets the conditions in Proposition \ref{prop:unitary-example-F}, say with $d = 2$.

\section{The case of \texorpdfstring{$\SO(4)$}{SO(4)}}\label{sec:counterexample-SO4}
Assume that $F$ is any field with $\mathrm{char}(F) \neq 2$. Let $(V, q)$ be a quadratic $F$-vector space of dimension $4$, and take the special orthogonal group $G := G(V, q)$.

The anisotropic kernel of $(V, q)$ has dimension $0$, $2$ or $4$. Only the first two cases need consideration.

First, suppose that $V$ is the sum of two copies of the hyperbolic plane $\mathbb{H}$. Then $G$ has a split maximal $F$-torus $T \simeq \Gm^2$: each $\Gm$ is isomorphic to $\SO(\mathbb{H})$. The Weyl group $\mathrm{W}(G, T)$ acting on $Y := \mathrm{X}_*(T) \simeq \Z^2$ is generated by the automorphisms
\[ (x, y) \mapsto (y, x) \;\text{and}\; (x, y) \mapsto (-x, -y), \quad (x, y) \in \Z^2. \]

The $\mathrm{W}(G, T)$-invariant quadratic forms $Y \to \Z$ are exactly those of the form
\begin{equation}\label{eqn:Q-SO4}
	Q = a(\mathsf{X}_1^2 + \mathsf{X}_2^2) + b \mathsf{X}_1 \mathsf{X}_2, \quad a, b \in \Z.
\end{equation}

One can always extend the $Q$ in \eqref{eqn:Q-SO4} to some $(Q, \mathcal{D}, \varphi) \in \cate{BD}(G, T)$. Indeed, by \cite[\S 2.6]{GG}, there exists a bilinear map $D: Y \times Y \to \Z$ (called a bisector) such that $D(y, y) = Q(y)$, and by choosing a group homomorphism $\eta: Y_{\mathrm{sc}} \to F^{\times}$ one obtains $(D, \eta) \in \cate{Bis}_{G, Q}$ (see \textit{loc.\ cit.}) that ``incarnates'' an object $(Q, \mathcal{D}, \varphi) \in \cate{BD}(G, T)$. We obtain a corresponding central extension
\begin{equation}\label{eqn:SO4-K2}
	0 \to \mathbf{K}_2 \to \tilde{\mathcal{G}} \to G \to 1.
\end{equation}

\begin{proposition}\label{prop:SO4-example}
	For $G = G(V, q)$ and $T$ as above, let $\tilde{\mathcal{T}}$ be the pull-back of \eqref{eqn:unitary-K2} to $T$. If $b \neq 0$, then $\tilde{\mathcal{T}}$ is not the external Baer sum of its pull-backs to the two factors $\Gm$.
\end{proposition}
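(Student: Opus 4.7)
The plan is to derive the claim directly from Proposition \ref{prop:Baer-orthogonal}, which characterizes when an object of $\cate{BD}(\prod_i G_i, \prod_i T_i)$ decomposes as an external Baer sum: the cocharacter summands $Y_i$ must be pairwise orthogonal with respect to the bilinear form $B_Q$. Since $T = \Gm \times \Gm$ is itself a product of reductive groups, applying this with $G_1 = G_2 = \Gm$, $Y_1 = \Z \cdot e_1$, $Y_2 = \Z \cdot e_2$ reduces the problem to a one-line computation of $B_Q$.

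Concretely, let $\tilde{\mathcal{T}}$ correspond via Theorem \ref{prop:BD} to the pull-back of $(Q, \mathcal{D}, \varphi)$ to $T$; this pull-back has the same underlying $Q$ as in \eqref{eqn:Q-SO4}, now viewed as a quadratic form on $Y = \mathrm{X}_*(T) = \Z e_1 \oplus \Z e_2$. A direct computation from \eqref{eqn:BQ} and \eqref{eqn:Q-SO4} gives
\[ B_Q(e_1, e_2) = Q(e_1 + e_2) - Q(e_1) - Q(e_2) = (2a + b) - a - a = b. \]
Therefore $Y_1$ and $Y_2$ are orthogonal with respect to $B_Q$ if and only if $b = 0$. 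Under the assumption $b \neq 0$ of the proposition, orthogonality fails, so Proposition \ref{prop:Baer-orthogonal} (applied in its contrapositive ``only if'' form) forces $\tilde{\mathcal{T}}$ not to be isomorphic to the external Baer sum of its pull-backs to the two factors $\Gm$.

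There is no real obstacle in this argument beyond making sure the machinery lines up: one should note that Proposition \ref{prop:Baer-orthogonal} applies with $G$ replaced by $T$ itself, since $T$ is a product of two connected reductive groups and the hypotheses of the proposition are formulated at that level. The only subtlety worth emphasizing is that the equivalence of Theorem \ref{prop:BD} preserves external Baer sums (Corollary \ref{prop:BD-external}), so non-decomposability on the $\cate{BD}$-side transfers to non-decomposability of $\tilde{\mathcal{T}}$ in $\cate{CExt}(T, \mathbf{K}_2)$; this is the same reduction used in the proof of Theorem \ref{prop:main}, but now run in the negative direction.
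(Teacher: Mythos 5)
Your proof is correct and follows the same route as the paper: the paper's one-line argument ("$Q$ contains the crossed term $\mathsf{X}_1\mathsf{X}_2$ with nonzero coefficient") is precisely your computation $B_Q(e_1,e_2)=b\neq 0$ combined with the ``only if'' direction of Proposition \ref{prop:Baer-orthogonal}. Your version just spells out the transfer through Theorem \ref{prop:BD} and Corollary \ref{prop:BD-external} explicitly, which the paper leaves implicit.
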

\begin{proof}
	Indeed, $Q$ contains the ``crossed'' term $\mathsf{X}_1 \mathsf{X}_2$ with nonzero coefficient.
\end{proof}

\begin{remark}\label{rem:SO4-example-F}
	Suppose $F$ is local. Choose $m \mid m_F$ and take the $m$-fold BD-cover $\tilde{G}$ of $G(F)$ arising from \eqref{eqn:SO4-K2}. As in Proposition \ref{prop:unitary-example-F}, but with much simpler arguments (using the commutator formula \cite[Corollary 3.14]{BD01} instead of Tate duality), one can show that for the preimage $\tilde{T}$ of $T(F)$, the commutator pairing between two copies of $F^{\times}$ is $(z, w) \mapsto (z, w)_{F, m}^b$. When $m \nmid b$, the commutator is non-constant and $\tilde{T}$ is not decomposed.
\end{remark}

Therefore, both Theorem \ref{prop:main} and Corollary \ref{prop:main-topological} fail for split $\SO(4)$.

Secondly, suppose that $V = \mathbb{H} \oplus V_{\mathrm{ani}}$, where $V_{\mathrm{ani}}$ is an anisotropic subspace of dimension $2$. In this case $G$ has a maximal $F$-torus
\[ T \simeq \Gm \times G(V_{\mathrm{ani}}), \]
which is also the only proper Levi subgroup of $G$ up to conjugacy. We then have the following counterpart of Theorem \ref{prop:main}.

\begin{proposition}\label{prop:SO4-example-nonsplit}
	For $G = G(V, q)$ and $T$ as above, let $\tilde{\mathcal{G}} \in \cate{CExt}(G, \mathbf{K}_2)$ and let $\tilde{\mathcal{T}}$ be its pull-back to $T$. Then $\tilde{\mathcal{T}}$ is the external Baer sum of its pull-backs to $\Gm$ and $G(V_{\mathrm{ani}})$.
	
	Consequently, when $F$ is local, Levi subgroups are decomposed in all BD-covers of $G(F)$.
\end{proposition}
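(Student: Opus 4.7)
The plan is to apply Proposition~\ref{prop:Baer-orthogonal} to the product decomposition $T = \Gm \times G(V_{\mathrm{ani}})$, exploiting the $\Gal_F$-invariance of $Q$ that objects of $\cate{BD}(G, T)$ must satisfy. This Galois constraint is exactly what is new relative to the split case of Proposition~\ref{prop:SO4-example}: there it is vacuous, but here it will kill the ``crossed'' coefficient $b$ of \eqref{eqn:Q-SO4}.

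First I would unpack the lattices and the two group actions. Over $\overline{F}$ one has $T_{\overline{F}} \simeq \Gm^2$ and $G_{\overline{F}} \simeq \SO(4)_{\overline{F}}$, so $Y := \mathrm{X}_*(T_{\overline{F}}) = Y_1 \oplus Y_2 \simeq \Z \oplus \Z$, with $Y_1$ (resp.\ $Y_2$) coming from the hyperbolic (resp.\ anisotropic) factor. Exactly as in the split case, $\mathrm{W}(G_{\overline{F}}, T_{\overline{F}})$ acts on $Y$ by the swap $(x, y) \mapsto (y, x)$ together with the total negation $(x, y) \mapsto (-x, -y)$. For the Galois action, let $E | F$ be the quadratic extension over which $V_{\mathrm{ani}}$ becomes hyperbolic; then $G(V_{\mathrm{ani}}) \simeq \Res_{E|F}^{(1)} \Gm$ is the norm-one torus, so the non-trivial element of $\Gal(E|F)$ acts on $Y_2$ by $-1$ while acting trivially on $Y_1$.

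Next I would pin down the invariant quadratic forms. Weyl-invariance alone constrains $Q$ to the shape $a(\mathsf{X}_1^2 + \mathsf{X}_2^2) + b \mathsf{X}_1 \mathsf{X}_2$, as in \eqref{eqn:Q-SO4}. Imposing invariance under the Galois involution $(x, y) \mapsto (x, -y)$ then forces $b = 0$; the associated bilinear form $B_Q((x, y), (x', y')) = 2a (x x' + y y')$ is consequently diagonal, and in particular $Y_1$ and $Y_2$ are mutually $B_Q$-orthogonal. Combining Theorem~\ref{prop:BD} (second part), Corollary~\ref{prop:BD-external} and Proposition~\ref{prop:Baer-orthogonal} then delivers the decomposition of $\tilde{\mathcal{T}}$ as an external Baer sum in $\cate{CExt}(T, \mathbf{K}_2)$. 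For local $F$, one invokes Proposition~\ref{prop:F-pts-Baer} exactly as in the proof of Corollary~\ref{prop:main-topological}, using that $T$ is, up to conjugacy, the only proper Levi subgroup of $G$. There is no serious obstacle; the key point is simply that Galois invariance eliminates precisely the term that produced the counterexample in Proposition~\ref{prop:SO4-example}.
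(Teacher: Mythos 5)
Your proof is correct and follows essentially the same route as the paper: both arguments reduce to the classification \eqref{eqn:Q-SO4} of Weyl-invariant quadratic forms and then observe that the nontrivial element of the splitting quadratic extension fixes $\mathsf{X}_1$ and negates $\mathsf{X}_2$, so Galois-invariance forces the crossed coefficient $b$ to vanish, after which Proposition~\ref{prop:Baer-orthogonal} and Proposition~\ref{prop:F-pts-Baer} conclude. The only difference is that you spell out the reduction steps (Theorem~\ref{prop:BD}, Corollary~\ref{prop:BD-external}) that the paper leaves implicit.
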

\begin{proof}
	Write $Y := \mathrm{X}_*(T)$. As $\Z$-modules, $Y \simeq \Z^2$ where the two coordinates correspond to the factors $\Gm$ and $G(V_{\mathrm{ani}})$ of $T$. Take a quadratic extension $F'|F$ that splits $G(V_{\mathrm{ani}})$.
	
	Let $(Q, \mathcal{D}, \varphi) \in \cate{BD}(G, T)$ be attached to $\tilde{\mathcal{G}}$. It remains to show that compared with \eqref{eqn:Q-SO4}, the only choices of $Q$ here are $a(\mathsf{X}_1^2 + \mathsf{X}_2^2)$, i.e.\ there are no crossed terms. Indeed, $\Gal(F'|F)$ fixes $\mathsf{X}_1$ and negates $\mathsf{X}_2$, whilst $Q$ must be $\Gal(F'|F)$-invariant. This completes the proof.
\end{proof}

\section{On the full orthogonal groups}\label{sec:full-orthogonal}
The case of full orthogonal groups is left unaddressed in \S\ref{sec:classical-group} and \S\S\ref{sec:counterexample-unitary}--\ref{sec:counterexample-SO4}. This section will offer a partial remedy.

Brylinski--Deligne theory applies only to connected reductive groups, so we proceed as follows. Consider a nonzero quadratic $F$-vector space $(V, q)$ and let $G^+ := \Or(V)$ be the full orthogonal group. Levi subgroups $M^+$ of $G^+$ have a form similar to \eqref{eqn:Levi}, namely
\[ M^+ = \prod_{i=1}^r \GL_{d_i} \times \Or(V^\flat), \quad r \in \Z_{\geq 0}, \quad d_i \in \Z_{\geq 1}. \]
Here we demand that $V^\flat$ is nonzero. Let $G$ (resp.\ $M$) be the identity connected component of $G^+$ (resp.\ $M^+$).

Let $m \in \Z_{\geq 1}$. Consider a topological central extension
\[ 1 \to \bmu_m \to \widetilde{G^+} \xrightarrow{\rev} G^+(F) \to 1. \]
Set $\widetilde{M^+} := \rev^{-1}(M^+(F))$. Denote by $\tilde{G}$ (resp.\ $\tilde{M}$) the preimages of $G(F)$ (resp.\ $M(F)$) in $\widetilde{G^+}$. Pick any $F$-point $\tau \in \Or(V^\flat) \smallsetminus \SO(V^\flat)$, viewed as an element of $M^+(F)$ that commutes with elements from each $\GL_{d_i}(F)$. Then $M^+ = M\tau$.

The Definition \ref{def:decomposed-Levi} naturally extends to the setting of $\widetilde{M^+} \subset \widetilde{G^+}$.

\begin{proposition}\label{prop:O}
	In the setting above, $\widetilde{M^+}$ is decomposed if and only if
	\begin{enumerate}[(i)]
		\item $\tilde{M}$ is decomposed, and
		\item $\mathrm{comm}(t_i, \tau) = 1$ for all $1 \leq i \leq r$ and all $t_i$ in the standard maximal torus of $\GL_{d_i}(F)$.
	\end{enumerate}
\end{proposition}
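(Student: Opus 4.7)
The plan is to convert both sides of the stated equivalence into commutator-vanishing conditions via the criterion of Proposition \ref{prop:recognition-Baer}, applied respectively to the decompositions
\[ M^+(F) = \prod_{i=1}^r \GL_{d_i}(F) \times \Or(V^\flat)(F), \qquad M(F) = \prod_{i=1}^r \GL_{d_i}(F) \times \SO(V^\flat)(F). \]
The ``only if'' direction will then be immediate: decomposability of $\widetilde{M^+}$ restricts to commutator-triviality on every pair of distinct factors of $M(F)$, hence to decomposability of $\tilde M$ by Proposition \ref{prop:decompose-comm}; and the pair $(t_i, \tau)$ lies in two distinct factors of $M^+(F)$, giving (ii).

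For the ``if'' direction, the key step is to upgrade (ii) from the standard maximal torus of $\GL_{d_i}(F)$ to the full group. Since $\tau$ commutes with every element of $\GL_{d_i}(F)$, the assignment $\chi_i \colon g \mapsto \mathrm{comm}(g,\tau)$ defines a continuous homomorphism $\GL_{d_i}(F)\to\bmu_m$. Because $\bmu_m$ is abelian, $\chi_i$ must factor through the abelianization of $\GL_{d_i}(F)$, which is $F^\times$ via the determinant (this is trivial for $d_i=1$ and, for $d_i \geq 2$, is the standard fact that $[\GL_n(F),\GL_n(F)] = \SL_n(F)$ when $|F|\geq 4$, automatic in our topological setting). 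Since the standard torus surjects onto $F^\times$ under $\det$, the triviality of $\chi_i$ there propagates to all of $\GL_{d_i}(F)$.

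With this in hand, the remaining pairings between distinct Levi factors of $M^+(F)$ split into two cases. Two $\GL$-factors: both sit inside $M(F)$, so (i) applies. A pair $\GL_{d_i}(F) \times \Or(V^\flat)(F)$: every $h \in \Or(V^\flat)(F)$ can be written as $s$ or $\tau s$ with $s \in \SO(V^\flat)(F)$, and bimultiplicativity of $\mathrm{comm}$ on commuting elements yields
\[ \mathrm{comm}(g,\tau s) = \mathrm{comm}(g,\tau)\,\mathrm{comm}(g,s), \]
with the first factor trivial by the upgraded (ii) and the second by (i). Applying Proposition \ref{prop:recognition-Baer} to $M^+(F)$ concludes the proof.

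The only mildly delicate point in the plan is the character-theoretic extension of (ii) to all of $\GL_{d_i}(F)$; it rests on the computation of $\GL_n(F)^{\mathrm{ab}}$ together with the continuity of $\mathrm{comm}$ on commuting pairs in a topological central extension. Everything else is a formal unpacking of the recognition criterion and the coset decomposition $\Or(V^\flat)(F) = \SO(V^\flat)(F) \sqcup \tau\SO(V^\flat)(F)$.
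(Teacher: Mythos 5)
Your proof is correct, and its overall skeleton (reduce everything to commutator-triviality between distinct factors via Proposition \ref{prop:recognition-Baer}, then upgrade condition (ii) from the torus to all of $\GL_{d_i}(F)$) is the same as the paper's. The one genuinely different ingredient is \emph{how} you perform the upgrade. The paper invokes the canonical splitting of topological central extensions over unipotent radicals of parabolics to get $\mathrm{comm}(u_i,\tau)=1$ on the maximal unipotent subgroup and its opposite, then concludes via the open Bruhat cell. You instead observe that, since $\tau$ centralizes $\GL_{d_i}(F)$ in $M^+(F)$ and the commutator takes central values, $g \mapsto \mathrm{comm}(g,\tau)$ is a homomorphism $\GL_{d_i}(F)\to\bmu_m$, hence factors through $\det\colon \GL_{d_i}(F)\to F^\times$ (using $[\GL_n(F),\GL_n(F)]=\SL_n(F)$, valid since a local field is infinite), and the standard torus already surjects onto $F^\times$. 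Your route is more elementary in that it avoids the (nontrivial) unipotent-splitting input and does not even need continuity of $\mathrm{comm}$, only its homomorphism property; the paper's route is more robust in that it generalizes to situations where the abelianization of the factor is not as easily computed. Both arguments leave a small implicit step that is harmless: in the paper, that the open cell generates $\GL_{d_i}(F)$; in yours, that triviality of the induced character on $F^\times$ follows from surjectivity of $\det$ on the torus. Your treatment of the remaining pairings, in particular the coset decomposition $\Or(V^\flat)(F)=\SO(V^\flat)(F)\sqcup \tau\,\SO(V^\flat)(F)$ combined with bimultiplicativity of $\mathrm{comm}$, is exactly what the paper leaves as ``immediately clear,'' so no gap there.
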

\begin{proof}
	In view of Proposition \ref{prop:recognition-Baer}, this will be immediately clear if (ii) is replaced by (ii)': $\mathrm{comm}(g_i, \tau) = 1$ for all $1 \leq i \leq r$ and $g_i \in \GL_{d_i}(F)$. Nonetheless, by the canonical splitting of topological central extensions over unipotent radicals of parabolics, we have $\mathrm{comm}(u_i, \tau) = 1$ for all $u_i$ in the standard maximal unipotent subgroup of $\GL_{d_i}(F)$ or in its opposite. Hence (ii) implies $\mathrm{comm}(g_i, \tau) = 1$ for all $g_i \in \GL_{d_i}(F)$ in the open Bruhat cell, so (ii) implies (ii)'.
\end{proof}

The condition (i) can be studied using the results for special orthogonal groups in \S\ref{sec:classical-group} and \S\ref{sec:counterexample-SO4}. On the other hand, (ii) can be checked once we know how $\tau$ acts by conjugation on the $(Q, \mathcal{D}, \varphi) \in \cate{BD}(G, T)$, where $T \subset M$ is a $\tau$-stable maximal $F$-torus, which always exists.

\section{On odd \texorpdfstring{$\GSpin$}{GSpin} groups}\label{sec:GSpin-odd}
We consider the split $\GSpin(2n+1)$ over a base field $F$ with $\mathrm{char}(F) \neq 2$, where $n \in \Z_{\geq 1}$. Also put $\GSpin(1) := \Gm$.

Recall from \cite[Proposition 2.4]{As02} that a Borel pair $(B, T)$ over $F$ of $\GSpin(2n+1)$ can be chosen so that $T$ is split and
\begin{align*}
	\mathrm{X}^*(T) & = \bigoplus_{i=0}^n \Z e_i, \\
	Y := \mathrm{X}_*(T) & = \bigoplus_{i=0}^n \Z e_i^*, \\
	\text{simple roots:} & \quad e_1 - e_2, \; \ldots, \; e_{n-1} - e_n, \; e_n, \\
	\text{simple coroots:} & \quad e^*_1 - e^*_2, \; \ldots, \; e^*_{n-1} - e^*_n, \; 2e^*_n - e^*_0,
\end{align*}
where $\{e_i\}_i$, $\{e_i^*\}_i$ are dual bases. Note that the root datum above is dual to that of $\GSp(2n)$.

By \cite[Theorem 2.7]{As02}, Levi subgroups of $\GSpin(2n+1)$ have a form similar to \eqref{eqn:Levi}, namely
\begin{equation}\label{eqn:Levi-GSpin}
	M = \prod_{i=1}^r \GL_{d_i} \times \GSpin(2n^\flat + 1), \quad \sum_{i=1}^r d_i + n^{\flat} = n.
\end{equation}
This can be seen via duality as in \textit{loc.\ cit.}, or more directly by removing the $d_1$, $d_1 + d_2$, ..., $d_1 + \cdots + d_r$-th simple roots from the list above, as well as the corresponding simple coroots.

When dealing with quadratic forms on $Y$, we will write the variable corresponding to $e_i$ as $\mathsf{X}_i$, for $i = 0, \ldots, n$.

\begin{lemma}\label{prop:GSpin-prep}
	Set $G = \GSpin(2n+1)$ and take $T \subset G$ as above. Let $Q: Y \to \Z$ be a $\mathrm{W}(G, T)$-invariant quadratic form.
	\begin{enumerate}[(i)]
		\item If $n > 1$, we have
		\[ Q = 2c \left( \sum_{j=1}^n \mathsf{X}_0 \mathsf{X}_j + \mathsf{X}_0^2 \right) + c \sum_{1 \leq j < k \leq n} \mathsf{X}_j \mathsf{X}_k + d \sum_{j=1}^n \mathsf{X}_j^2 \]
		for unique $c, d \in \Z$.
		\item If $n = 1$, we have
		\[ Q = a\mathsf{X}_0^2 + a\mathsf{X}_0 \mathsf{X}_1 + d\mathsf{X}_1^2 \]
		for unique $a, d \in \Z$.
	\end{enumerate}
\end{lemma}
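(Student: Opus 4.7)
The plan is to translate the Weyl invariance of $Q$ into an explicit set of linear constraints on the coefficients of a general quadratic polynomial in $\mathsf{X}_0, \mathsf{X}_1, \ldots, \mathsf{X}_n$, using the simple coroots given in the setup.

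First, I would compute the action of the simple reflections on the basis $\{e_0^*, e_1^*, \ldots, e_n^*\}$ via $s_\alpha(y) = y - \langle \alpha, y \rangle \alpha^\vee$. The reflections $s_{\alpha_i}$ for $1 \leq i \leq n-1$ swap $e_i^*$ with $e_{i+1}^*$ and fix the remaining basis vectors, so $\mathrm{W}(G, T)$ contains $S_n$ permuting $e_1^*, \ldots, e_n^*$ while fixing $e_0^*$. The last reflection $s_{\alpha_n}$, attached to the coroot $2e_n^* - e_0^*$, fixes $e_j^*$ for $j \neq n$ and sends $e_n^* \mapsto e_0^* - e_n^*$; in dual coordinates this is the substitution $\mathsf{X}_0 \mapsto \mathsf{X}_0 + \mathsf{X}_n$, $\mathsf{X}_n \mapsto -\mathsf{X}_n$, with the other $\mathsf{X}_j$ unchanged.

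Next, the $S_n$-invariance already forces
\[ Q = A \mathsf{X}_0^2 + B \mathsf{X}_0 \sum_{j=1}^n \mathsf{X}_j + C \sum_{1 \leq j < k \leq n} \mathsf{X}_j \mathsf{X}_k + D \sum_{j=1}^n \mathsf{X}_j^2, \quad A, B, C, D \in \Z, \]
with the $C$-sum absent when $n = 1$. Imposing the $s_{\alpha_n}$-substitution and reading off coefficients, the $\mathsf{X}_0 \mathsf{X}_n$ term gives $B = 2A - B$, i.e.\ $B = A$; the $\mathsf{X}_n^2$ term gives $D = A - B + D$, which is automatic; and when $n > 1$ the $\mathsf{X}_j \mathsf{X}_n$ term with $j < n$ gives $C = B - C$, i.e.\ $2C = B$. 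Thus for $n > 1$ one must have $A = B = 2C$, so setting $c := C$ and $d := D$ yields (i); for $n = 1$ the only relation is $A = B$, so setting $a := A$ and $d := D$ yields (ii). Uniqueness of $c, d$ (resp.\ $a, d$) is immediate from the linear independence of the displayed monomials.

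I expect no real obstacle here; the only delicate step is the bookkeeping of coefficients under the $s_{\alpha_n}$-substitution, where the mismatch between $B = A$ and $2C = B$ is precisely what forces $A$ to be even in case (i) and produces the asymmetric normalization by $2c$ versus $c$ in the statement. One should also verify a posteriori that the forms exhibited in (i) and (ii) are genuinely Weyl-invariant, which amounts to running the same substitution backwards and matches every coefficient.
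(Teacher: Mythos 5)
Your proposal is correct and follows essentially the same route as the paper: first use the $S_n$-subgroup generated by the reflections in $e_i - e_{i+1}$ to reduce $Q$ to a four-parameter family, then impose invariance under the last reflection (computed from the coroot $2e_n^* - e_0^*$, giving the substitution $\mathsf{X}_0 \mapsto \mathsf{X}_0 + \mathsf{X}_n$, $\mathsf{X}_n \mapsto -\mathsf{X}_n$) and compare coefficients to obtain $A = B = 2C$ for $n>1$ and $A = B$ for $n=1$. The coefficient bookkeeping matches the paper's, and since the comparison accounts for every monomial, the conditions are both necessary and sufficient, so the a posteriori check you mention is already subsumed.
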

\begin{proof}
	Reflections with respect to roots $e_1 - e_2$, ..., $e_{n-1} - e_n$ generate all permutations of $\mathsf{X}_1, \ldots, \mathsf{X}_n$, and keep $\mathsf{X}_0$ fixed. Hence an invariant quadratic form $Q$ on $Y$ can be written as
	\[ Q = a \sum_{j=1}^n \mathsf{X}_0 \mathsf{X}_j + b \mathsf{X}_0^2 + c \sum_{1 \leq j < k \leq n} \mathsf{X}_j \mathsf{X}_k + d \sum_{i=1}^n \mathsf{X}_j^2. \]
	
	Consider the reflection $s_n$ with respect to $e_n$. The corresponding coroot being $2e_n^* - e^*_0$, we see that
	\[ s_n \mathsf{X}_j = \begin{cases}
		\mathsf{X}_0 + \mathsf{X}_n, & \text{if}\; j = 0 \\
		\mathsf{X}_j, & \text{if}\; 1 \leq j < n \\
		-\mathsf{X}_n, & \text{if}\; j = n.
	\end{cases}\]
	
	Therefore,
	\begin{multline*}
		s_n Q = a \sum_{1 \leq j < n} (\mathsf{X}_0 + \mathsf{X}_n)\mathsf{X}_j - a (\mathsf{X}_0 + \mathsf{X}_n)\mathsf{X}_n + b (\mathsf{X}_0 + \mathsf{X}_n)^2 \\
		+ c \sum_{1 \leq j < k \leq n} \mathsf{X}_j \mathsf{X}_k - 2c \sum_{1 \leq j < n} \mathsf{X}_j \mathsf{X}_n + d \sum_{j=1}^n \mathsf{X}_j^2 \\
		= Q + a \sum_{1 \leq j < n} \mathsf{X}_j \mathsf{X}_n -2a \mathsf{X}_0 \mathsf{X}_n - a \mathsf{X}_n^2 + 2b \mathsf{X}_0 \mathsf{X}_n + b \mathsf{X}_n^2 - 2c \sum_{1 \leq j < n} \mathsf{X}_j \mathsf{X}_n .
	\end{multline*}
	When $n > 1$, we see $Q$ is $\mathsf{W}(G, T)$-invariant if and only if $a = b = 2c$.
	
	When $n=1$, the term $\sum_{1 \leq j < n} \mathsf{X}_j \mathsf{X}_n$ disappear, thus $a = b$ and $Q$ also has the desired form.
\end{proof}

\begin{proposition}\label{prop:GSpin}
	Let $G = \GSpin(2n+1)$ with $n > 1$ (resp.\ $n = 1$), and $M \subsetneq G$ as in \eqref{eqn:Levi-GSpin}. Given $\tilde{\mathcal{G}} \in \cate{CExt}(G, \mathbf{K}_2)$ with attached datum $(Q, \mathcal{D}, \varphi) \in \cate{BD}(G, T)$, the pull-back $\tilde{\mathcal{M}}$ of $\tilde{\mathcal{G}}$ to $M$ is the external Baer sum of pull-backs to factors in \eqref{eqn:Levi-GSpin} if and only if the parameter $c$ (resp.\ $a$) in Lemma \ref{prop:GSpin-prep} is zero.
\end{proposition}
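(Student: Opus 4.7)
By Corollary \ref{prop:BD-external}, the decomposition of $\tilde{\mathcal{M}}$ as an external Baer sum of its pull-backs to the factors of $M$ is equivalent to the analogous decomposition for the associated datum in $\cate{BD}(M, T)$; by Proposition \ref{prop:Baer-orthogonal}, the latter is in turn equivalent to the mutual orthogonality, relative to $B_Q$, of the cocharacter sublattices $Y_1, \ldots, Y_r, Y^\flat \subset Y$ attached to the factors $\GL_{d_1}, \ldots, \GL_{d_r}, \GSpin(2n^\flat + 1)$ of $M$. The plan is therefore to identify these sublattices explicitly inside $Y = \bigoplus_{i=0}^n \Z e_i^*$ and then to test the orthogonality against the shape of $Q$ supplied by Lemma \ref{prop:GSpin-prep}.

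To pin down the sublattices, I would consult the list of simple coroots recalled at the start of \S\ref{sec:GSpin-odd}: the Levi $M$ is obtained by omitting the simple roots indexed by $d_1, d_1 + d_2, \ldots, d_1 + \cdots + d_r$, so the simple coroots of the $i$-th $\GL_{d_i}$-factor are $e_j^* - e_{j+1}^*$ with $d_1 + \cdots + d_{i-1} + 1 \leq j \leq d_1 + \cdots + d_i - 1$, while those of $\GSpin(2n^\flat + 1)$ (when $n^\flat \geq 1$) are $e_j^* - e_{j+1}^*$ for $d_1 + \cdots + d_r + 1 \leq j \leq n - 1$ together with the short coroot $2e_n^* - e_0^*$. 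Comparing ranks, one is forced to
\begin{equation*}
Y_i = \bigoplus_{j = d_1 + \cdots + d_{i-1} + 1}^{d_1 + \cdots + d_i} \Z e_j^*, \qquad Y^\flat = \Z e_0^* \oplus \bigoplus_{j = d_1 + \cdots + d_r + 1}^{n} \Z e_j^*,
\end{equation*}
where $e_0^*$ ends up in $Y^\flat$ because of its appearance in the short coroot (in the degenerate case $n^\flat = 0$ one simply reads off $Y^\flat = \Z e_0^*$).

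From Lemma \ref{prop:GSpin-prep}, the off-diagonal values of $B_Q$ for $n > 1$ are $B_Q(e_0^*, e_k^*) = 2c$ for $1 \leq k \leq n$ and $B_Q(e_j^*, e_k^*) = c$ for $1 \leq j < k \leq n$, while for $n = 1$ one has $B_Q(e_0^*, e_1^*) = a$. Since $M \subsetneq G$ forces $r \geq 1$, the lattice $Y^\flat$ always contains $e_0^*$ and some $Y_i$ contains an $e_j^*$ with $j \geq 1$; pairing these yields $2c$ (resp.\ $a$), which must vanish, giving the ``only if'' direction. Conversely, $c = 0$ (resp.\ $a = 0$) kills every off-diagonal entry of $B_Q$ on $Y$ and thus makes $Y_1, \ldots, Y_r, Y^\flat$ automatically pairwise orthogonal. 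The argument is essentially mechanical once the sublattices are correctly identified; the only subtle point is the placement of $e_0^*$ in $Y^\flat$, which encodes the fact that the ``similitude'' $\Gm$-direction cannot be split off as an independent factor of the Levi.
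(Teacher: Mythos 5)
Your proposal is correct and follows exactly the route the paper intends (the paper's proof is literally ``combine Proposition~\ref{prop:Baer-orthogonal} and Lemma~\ref{prop:GSpin-prep}''); you have merely filled in the explicit identification of $Y_1,\ldots,Y_r,Y^\flat$ inside $Y$ and the evaluation of the off-diagonal entries of $B_Q$. The only loose phrase is ``comparing ranks, one is forced to'' --- rank-counting alone does not pin down the sublattices, but your identification (with $e_0^*$ placed in $Y^\flat$) is the correct one, as one sees for instance from the dual description of the Levi inside $\GSp(2n)$, and the ensuing computation is right in both the $n>1$ and $n=1$ cases.
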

\begin{proof}
	Combine Proposition \ref{prop:Baer-orthogonal} and Lemma \ref{prop:GSpin-prep}. See also the discussions after \eqref{eqn:Levi-GSpin}.
\end{proof}

\begin{remark}\label{rem:non-split-GSpin}
	For non-split $\GSpin(2n+1)$ with $n > 1$, we still have that $c = 0$ implies the decomposability of $\tilde{\mathcal{M}}$ into external Baer sum.
\end{remark}

\begin{remark}
	Suppose $F$ is local, choose $m \mid m_F$ and take the $m$-fold BD-cover $\tilde{G}$ (resp.\ $\tilde{M}$) arising from $\tilde{\mathcal{G}}$ (resp.\ $\tilde{\mathcal{M}}$). One can determine whether $\tilde{M}$ is decomposed or not (cf.\ Definition \ref{def:decomposed-Levi}) by inspecting the expression of $Q$ in Lemma \ref{prop:GSpin-prep}, as explained below.
	\begin{itemize}
		\item In view of Proposition \ref{prop:GSpin}, the condition $c = 0$ implies $\tilde{M}$ is decomposed, by working on the level of $\mathbf{K}_2$-extensions. This also holds for non-split $\GSpin(2n+1)$ by Remark \ref{rem:non-split-GSpin}.
		
		\item Assume $G$ is split. To obtain sharper results, observe that the $F$-points of factors in \eqref{eqn:Levi-GSpin} commute in $\tilde{M}$ if and only if their intersections with $T(F)$ commute. Indeed, this follows from the Bruhat decomposition; see the proof of Proposition \ref{prop:O}. The relevant commutators are then described in terms of Hilbert symbols, cf.\ the discussions in \S\ref{sec:counterexample-unitary} and \S\ref{sec:counterexample-SO4}, or \cite[Corollary 3.14]{BD01}.
	\end{itemize}
\end{remark}

\begin{remark}
	According to \cite[\S 2]{As02}, Levi subgroups of $\GSpin(2n)$ or $\GSp(2n)$ have the same description as \eqref{eqn:Levi-GSpin}, and the invariant quadratic forms $Q: Y \to \Z$ can be determined by a computation akin to Lemma \ref{prop:GSpin-prep}. These cases are omitted since the method of \cite{HW25} does not apply to them thus far.
\end{remark}


\printbibliography[heading=bibintoc]

\vspace{1em}
\begin{flushleft} \small
	W.-W. Li: Beijing International Center for Mathematical Research / School of Mathematical Sciences, Peking University. No.\ 5 Yiheyuan Road, 100871 Beijing, P.~R.~China. \\
	E-mail address: \href{mailto:wwli@bicmr.pku.edu.cn}{\texttt{wwli@bicmr.pku.edu.cn}}
\end{flushleft}

\end{document}